\theoremstyle{plain}
\newtheorem{theorem}{Theorem}[section]
\newtheorem{lemma}[theorem]{Lemma}
\newtheorem{claim}[theorem]{Claim}
\theoremstyle{definition}
\newtheorem{definition}[theorem]{Definition}
\newcommand{\eps}{\varepsilon}
\newcommand{\sm}{\setminus}
\newcommand{\N}{\mathbb{N}}
\newcommand{\RdG}{{G'}}
\newcommand{\ue}{u_\text{e}}
\newcommand{\uo}{u_\text{o}}
\newcommand{\ceiling}[1]{\left\lceil#1\right\rceil}
\newcommand{\rioflat}{{(12+\sqrt{8})}/{17}}
\renewcommand{\subset}{\subseteq}
\begin{document}

\begin{frontmatter}[classification=text]


\author[jc]{Jan Corsten\thanks{Research supported by an LSE PhD scholarship.}}
\author[ld]{Louis DeBiasio\thanks{Research supported in part by Simons Foundation Collaboration Grant \# 283194.}}
\author[al]{Ander Lamaison\thanks{Research supported by the Deutsche Forschungsgemeinschaft (DFG, German Research Foundation) under Germany's Excellence Strategy - The Berlin Mathematics Research Center MATH+ (EXC-2046/1, project ID: 390685689).}}
\author[rl]{Richard Lang\thanks{Research supported by EPSRC, grant no. EP/P002420/1.}}

\begin{abstract}
We prove that in every $2$-colouring of the edges of $K_\N$ there exists a monochromatic infinite path $P$ such that $V(P)$ has upper density at least $\rioflat \approx 0.87226$ and further show that this is best possible.
This settles a problem of Erd\H{o}s and Galvin.
\end{abstract}
\end{frontmatter}

\section{Introduction}
Given a complete graph $K_n$, whose edges are coloured in red and blue, what is the longest monochromatic path one can find?
Gerencs\'er and Gy\'arf\'as~\cite{GG} proved that there is always a monochromatic path on $\ceiling{(2n+1)/3}$ vertices, which is best possible.
It is natural to consider a density analogue of this result for 2-colourings of $K_{\N}$.
The \emph{upper density} of a graph $G$ with $V(G) \subset \N$ is defined as $$\bar{d}(G) = \limsup_{t\rightarrow\infty} \frac{|V(G) \cap \{1,2,\ldots,t\}|}{t}.$$
The \emph{lower density} is defined similarly in terms of the infimum and we speak of the \emph{density}, whenever lower and upper density coincide.

Erd\H{o}s and Galvin~\cite{EG} described a $2$-colouring of $K_{\N}$ in which every monochromatic infinite path has lower density $0$ and thus we restrict our attention to upper densities.
Rado~\cite{R} proved that every $r$-edge-coloured $K_{\N}$ contains $r$ vertex-disjoint monochromatic paths which together cover all vertices. In particular, one of them must have upper density at least $1/r$.
Erd\H{o}s and Galvin~\cite{EG} proved that in every $2$-colouring of $K_{\N}$ there is a monochromatic path $P$ with $\bar{d}(P)\geq 2/3$. 
Moreover, they constructed a $2$-colouring of $K_{\N}$ in which every monochromatic path $P$ has upper density at most $8/9$.
DeBiasio and McKenney~\cite{DM16} recently improved the lower bound to $3/4$ and conjectured the correct value to be $8/9$.
Progress towards this conjecture was made by Lo, Sanhueza-Matamala and Wang~\cite{LSW18}, who raised the lower bound to $(9+\sqrt{17})/16\approx 0.82019$.

We prove that the correct value is in fact $\rioflat\approx 0.87226$. 

\begin{theorem}\label{thm:upper-bound}
	There exists a 2-colouring of the edges of $K_\N$ such that every monochromatic path has upper density at most $\rioflat$.
\end{theorem}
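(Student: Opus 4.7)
The plan is to give an explicit 2-colouring of $K_\N$ and verify, by a careful case analysis, that every monochromatic path in it has upper density at most $\rioflat$.

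I would partition $\N$ into consecutive finite blocks $I_1 < I_2 < \cdots$ whose sizes grow so fast (say $|I_{k+1}| \ge k \cdot |I_1 \cup \cdots \cup I_k|$) that the upper density of any $S \subset \N$ is, up to $o(1)$, equal to $\limsup_k |S \cap I_k|/|I_k|$. Each block is coloured according to a fixed template: $I_k$ is partitioned into a bounded number of parts with prescribed relative sizes, each part is coloured internally either entirely red, entirely blue, or in a specific mixed pattern, and edges between parts are coloured by a fixed rule that is symmetric under swapping the two colours. Edges between different blocks are coloured by a trivial convention (for instance, all red); because the blocks grow rapidly, inter-block edges contribute nothing asymptotically to the density of a monochromatic path.

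Given any monochromatic, say red, path $P$, the heart of the argument is to show that $|V(P) \cap I_k|/|I_k| \le \rioflat + o(1)$ as $k \to \infty$. This reduces to an extremal question inside a single block: in the graph of red edges of $I_k$, what is the longest path? Because the inter-block convention lets $P$ cross into $I_k$ only a bounded number of times, $P \cap I_k$ decomposes into a bounded number of red sub-paths of $I_k$. The template is designed so that parts coloured internally blue can be entered and left by a red sub-path but not traversed internally, while the red parts have complete red internal structure; combining this with the cross-edge rule limits any red path in $I_k$ to at most $\rioflat \cdot |I_k|$ vertices. The analogous bound for blue paths follows from the template's symmetry.

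The relative sizes of the parts are chosen to balance the worst-case red and blue densities. Setting up the optimisation and equating the two optima leads to the quadratic $17x^2 - 24x + 8 = 0$, whose larger root is exactly $\rioflat$; this pins down the sizes and hence the construction. The main obstacle is the extremal analysis inside a single block: a monochromatic path may enter and leave a block several times and visit its parts in complex orders, so certifying that no itinerary exceeds $\rioflat \cdot |I_k|$ requires a tight combinatorial lemma about paths in the red (resp.\ blue) auxiliary graph of the block. Achieving this tightness is what forces the non-obvious value $\rioflat$ and constitutes the technical core of the proof.
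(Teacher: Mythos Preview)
There is a concrete gap in the inter-block step. With the convention ``all edges between different blocks are red'', a red path can cover \emph{every} vertex of $\N$: starting in $I_1$, alternate $I_1 \leftrightarrow I_2$ until $I_1$ is exhausted, then alternate $I_2 \leftrightarrow I_3$ until $I_2$ is exhausted, and so on. Every edge used is an inter-block edge, hence red, and since $|I_{k+1}| \ge |I_k|$ there are always enough fresh vertices in the next block. This gives a red path of density $1$, so the construction as stated does not even beat the trivial bound. The underlying error is the sentence ``because the blocks grow rapidly, inter-block edges contribute nothing asymptotically'': the \emph{number} of inter-block edges incident to $I_k$ is huge (every vertex of $I_k$ is joined in red to all of $I_{k+1}$), and a path can use them to thread through $I_k$ without ever using an internal edge of $I_k$. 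Consequently the claim that $P$ crosses into $I_k$ only a bounded number of times is false, the reduction to ``longest red path inside a single block'' collapses, and the colour-swap symmetry you invoke for the blue case is also destroyed.

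This is not a cosmetic issue that a smarter inter-block rule patches for free. Any inter-block rule that still gives one colour a complete bipartite graph between $I_k$ and some infinite set outside $I_k$ suffers the same defect; and once you start colouring inter-block edges nontrivially (say, by the parity of the smaller block index) you are back to a global construction in which the blocks interact and the ``single-block template'' reduction no longer applies. The paper does exactly this: it takes the Erd\H{o}s--Galvin geometric colouring (blocks $A_i$ with $|A_i|=\lfloor q^i\rfloor$, edges coloured by the parity of the smaller index), which by itself only gives $8/9$, and then \emph{reorders} $\N$ via an explicit bijection $f$ that interleaves the red and blue vertex classes. The analysis is global: one shows that any red path $P$ satisfies $|V(P)\cap f([k])|\le |V(M_r)\cap f([k])|$ for a fixed extremal red matching $M_r$, and then computes $\bar d(M_r;f)=\tfrac{q^2+2q-1}{q^2+3q-2}$, minimised at $q=1+\sqrt{2}$ to give $\rioflat$. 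The reordering, not a within-block optimisation, is what moves the bound from $8/9$ down to $\rioflat$.
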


\begin{theorem}\label{thm:lower-bound}
	In every $2$-colouring of the edges of $K_\N$, there exists a monochromatic path of upper density at least $\rioflat$.
\end{theorem}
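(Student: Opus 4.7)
Write $\rho := \rioflat$, so that $\rho$ is a root of $17x^2 - 24x + 8 = 0$. My plan is to combine a regularity-style reduction of $K_\N$ with a tight extremal analysis matching the construction in \Cref{thm:upper-bound}. First I would pass to a regularity structure: apply Szemer\'edi's regularity lemma to longer and longer prefixes $[n]$ of $\N$ and stitch the resulting partitions together by a diagonal argument (or use an infinite regularity lemma designed for $2$-edge-coloured $K_\N$) to obtain a partition $\N = V_0 \cup V_1 \cup V_2 \cup \cdots$ in which $V_0$ has arbitrarily small density, each $V_i$ has a well-defined density $d_i$, all pairs $(V_i,V_j)$ are $\varepsilon$-regular in both colours, and each such pair has a majority colour $\chi(ij)\in\{r,b\}$. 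This yields a $2$-coloured weighted \emph{reduced graph} $R$ on infinitely many vertices with weights $(d_i)$.

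\textbf{Extremal analysis on the reduced graph.} The crux is to show that $R$ contains a monochromatic connected matching (plus a few attached clusters) whose cluster set carries total weight at least $\rho$. Following and sharpening the approach of DeBiasio--McKenney and Lo--Sanhueza-Matamala--Wang, I would split clusters according to which monochromatic component of $R$ they dominantly belong to, and then inside a dominant component seek a near-perfect connected monochromatic matching via a Gy\'arf\'as-type double-star argument. Assuming for contradiction that no monochromatic connected matching of weight $\geq \rho$ exists, a careful accounting — balancing the weights of vertices that are mostly red, mostly blue, and balanced, and using the classical fact that every vertex lies in a monochromatic component covering at least half of $R$ — should collapse to the inequality $17x^2 - 24x + 8 > 0$ at $x = \rho$, contradicting $17\rho^2 - 24\rho + 8 = 0$. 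The extremal configuration should mirror the construction underlying \Cref{thm:upper-bound}, which is what forces the specific value $\rho = \rioflat$.

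\textbf{Lifting and main obstacle.} Given such a monochromatic connected matching in $R$, I would construct the infinite path iteratively along an increasing sequence of prefixes $[n_k]$: inside each prefix, regularity allows embedding a long monochromatic path following the matching and covering density $\geq \rho - o(1)$; then chain these finite paths together, either by an absorbing method (reserving a flexible set of vertices at each step to glue successive segments) or by a K\"onig-type compactness argument on the tree of candidate partial extensions. The main obstacle, as usual in this area, is the extremal analysis in the middle step: proving that the reduced graph \emph{always} contains a monochromatic connected matching of weight $\geq \rho$, with $\rho$ exactly pinned down by $17x^2 - 24x + 8 = 0$. The delicate case is the balanced one, where the red and blue monochromatic components of $R$ have comparable sizes and dense cross-interaction; this is where the specific algebraic identity governing $\rioflat$ must appear, and I would expect to spend most of the work there.
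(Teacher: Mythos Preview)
Your proposal has a genuine gap in the central step, and the overall shape of the argument differs from the paper's in a way that matters.

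\textbf{The ordering is essential and your reduced graph loses it.} Upper density is about initial segments $[t]$, so any reduction to a finite/infinite reduced graph must carry an \emph{order} on the clusters, and the extremal problem you solve must find, for some prefix $[t']$ of that order, a monochromatic structure covering a $\rho$-fraction of the clusters in $[t']$. Your weighted reduced graph with cluster densities $d_i$ and the task ``find a monochromatic connected matching of total weight $\geq \rho$'' is the wrong extremal problem: a connected matching whose total weight is $\rho$ says nothing about any particular prefix. The paper handles this by applying regularity to finite prefixes with an initial partition into short intervals, so that the reduced graph is ordered and one can speak of $d(F',t')$ for varying $t'$; it then uses an Erd\H{o}s--Galvin two-case argument (either some monochromatic component is finitely separated, giving density $1$, or every two same-colour vertices can be joined avoiding any finite set) to lift finite path forests to an infinite path. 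Your ``stitch regularity partitions / infinite regularity lemma'' and ``absorbing or K\"onig compactness'' sketches are not substitutes for this and would need substantial new work.

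\textbf{The extremal analysis is not a connected-matching problem.} The approaches of DeBiasio--McKenney and Lo--Sanhueza-Matamala--Wang that you propose to sharpen do not reach $\rho$; the paper's improvement comes from a different mechanism. In the ordered reduced graph they study the non-decreasing sequence $a_1\le\cdots\le a_{|R|}$ of blue degrees of red vertices into $B$, define the \emph{oscillation} as the largest $T$ with both $a_i-i\ge T$ and $j-a_j\ge T$ for some $i,j$, and show via K\"onig's theorem that for every $t$ below the oscillation one gets a monochromatic simple forest of density $\ell/(\ell+t)$ where $\ell=\ell^+(t)+\ell^-(t)$. The heart of the proof is then a purely sequence-theoretic lemma: for some $t$ one has $\ell\ge (4+\sqrt{8}-\gamma)t$, and $(4+\sqrt{8})/(5+\sqrt{8})=\rioflat$. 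That bound is obtained by reducing to a second-order linear recurrence $b_{i+1}=(\rho-1)b_i-\rho b_{i-1}$ with $\rho=3+\sqrt{8}-\gamma$, whose characteristic polynomial $x^2-(\rho-1)x+\rho$ has discriminant $\rho^2-6\rho+1<0$, forcing the sequence to go negative. None of this comes out of a Gy\'arf\'as double-star or component-balancing argument, and your assertion that the contradiction ``collapses to $17x^2-24x+8>0$ at $x=\rho$'' is not backed by any mechanism in your outline; it is precisely the step where the paper introduces new ideas.
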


Now that we have solved the problem for two colours, it would be very interesting to make any improvement on Rado's lower bound of $1/r$ for $r\geq 3$ colours (see~\cite[Corollary 3.5]{DM16} for the best known upper bound).  In particular for three colours, the correct value is between $1/3$ and $1/2$.

\section{Notation}
We write $\N$ to be the positive integers with the standard ordering.
Throughout the paper when referring to a finite graph on $n$ vertices, it is always assumed that the vertex set is $[n] = \{1, 2, \dots, n\}$ and that it is ordered in the natural way.
An \emph{infinite path} $P$ is a graph with vertex set $V(P)=\{v_i: i\in \N\}$ and edge set $E(P)=\{v_iv_{i+1}: i\in \N\}$.  While paths are defined to be one-way infinite, all of the results mentioned above on upper density of monochromatic infinite paths apply equally well to two-way infinite paths.
For a graph $G$ with $V(G) \subset \N$ and $t\in \N$, we define $$d(G,t) = \frac{|V(G) \cap [t]|}{t}.$$
Thus we can express the {upper density} of $G$ as
$\bar{d}(G) = \limsup_{t\rightarrow\infty} d(G,t).$

\section{Upper bound}\label{sec:upper-bound}
In this section, we will prove Theorem~\ref{thm:upper-bound}. Let $q>1$ be a real number, whose exact value will be chosen later on.
We start by defining a colouring of the edges of the infinite complete graph.
Let $A_0,A_1,\dots$ be a partition of $\N$, such that every element of $A_i$ precedes every element of $A_{i+1}$ and $|A_i| = \lfloor q^i \rfloor$.
We colour the edges of $G=K_\N$ such that every edge $uv$ with $u\in A_i$ and $v\in A_j$ is red if $\min\{i,j\}$ is odd, and blue if it is even.
A straightforward calculation shows that for $q=2$, every monochromatic path $P$ in $G$ satisfies $\bar{d}(P)\leq 8/9$ (see Theorem 1.5 in \cite{EG}).	
We will improve this bound by reordering the vertices of $G$ and then optimizing the value of $q$.

For convenience, we will say that the vertex $v \in A_i$ is red if $i$ is odd and blue if $i$ is even. 
We also denote by $B$ the set of blue vertices and by $R$ be the set of red vertices. Let $b_i$ and $r_i$ denote the $i$-th blue vertex and the $i$-th red vertex, respectively.
We define a monochromatic red matching $M_r$ by forming a matching between $A_{2i-1}$ and the first $|A_{2i-1}|$ vertices of $A_{2i}$ for each $i\geq 1$. Similarly, we define a monochromatic blue matching $M_b$ by forming a matching between $A_{2i}$ and the first $|A_{2i}|$ vertices of $A_{2i+1}$ for each $i\geq 0$.

\begin{figure}[ht]
	\centering
	\includegraphics[scale=.75]{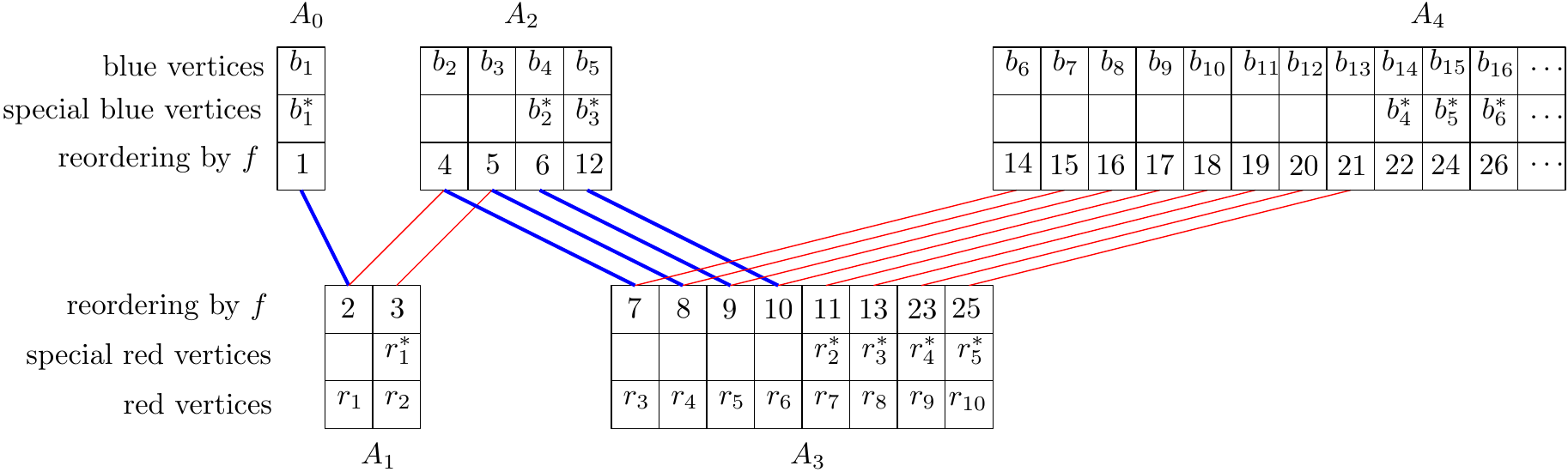}
	\caption{The colouring for $q=2$ and the reordering by $f$.}\label{fig:geo}
\end{figure}
Next, let us define a bijection $f\colon \N \to V(G)$, which will serve as a reordering of $G$.
Let $r_t^*$ denote the $t$-th red vertex not in $M_b$, and $b_t^*$ denote the $t$-th blue vertex not in $M_r$. The function $f$ is defined as follows.
We start enumerating blue vertices, in their order, until we reach $b_1^*$. 
Then we enumerate red vertices, in their order, until we reach $r_1^*$. 
Then we enumerate blue vertices again until we reach $b_2^*$. 
We continue enumerating vertices in this way, changing colours whenever we find an $r_t^*$ or a $b_t^*$.
(See Figure~\ref{fig:geo}.)
Finally, for every $H \subset G$, we define
\[\bar{d}(H;f) = \limsup_{t\rightarrow\infty} \frac{|V(H) \cap f([t])|}{t}.\]
Note that $\bar d(H;f)$ is the upper density of $H$ in the reordered graph $f^{-1}(G)$. 

\begin{claim}\label{cla:faithful}
	Let $P_r$ and $P_b$ be infinite monochromatic red and blue paths in $G$, respectively. Then $\bar{d}(P_r;f)\leq \bar{d}(M_r;f)$ and $\bar{d}(P_b;f)\leq \bar{d}(M_b;f)$.
\end{claim}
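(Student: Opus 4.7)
The plan is to prove the stronger pointwise inequality
\[
|V(P_r) \cap f([t])| \le |V(M_r) \cap f([t])| + O(1)
\]
for every $t$ and then to divide by $t$ and take the $\limsup$; the bound for $P_b$ follows by a symmetric argument with the roles of the two colours exchanged.

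The first step is a structural observation on the colouring. Since an edge between $u \in A_i$ and $v \in A_j$ with $i \le j$ is red iff $i$ is odd, a blue vertex in $A_{2j}$ has red neighbours only in the strictly earlier red blocks $A_1 \cup A_3 \cup \dots \cup A_{2j-1}$. Hence every blue vertex of $P_r$ uses its (at most two) path edges to reach those blocks, and double-counting these edges using that each vertex of $P_r$ has degree at most $2$ yields the \emph{local bound}
\[
|V(P_r) \cap (A_0 \cup A_2 \cup \dots \cup A_{2j})| \;\le\; |V(P_r) \cap (A_1 \cup A_3 \cup \dots \cup A_{2j-1})| + c
\]
for every $j \ge 0$, where $c$ is an absolute constant depending only on the number of endpoints of $P_r$.

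The second step exploits the structure of $f$: by construction $f$ interleaves the two natural orderings, so $f([t]) = B_l \cup R_k$ for some $l + k = t$, where $B_l$ and $R_k$ denote the first $l$ blue and first $k$ red vertices in the natural order. Since every red vertex lies in $M_r$, $|V(P_r) \cap R_k| \le k = |V(M_r) \cap R_k|$. For the blue part, let $j$ be the largest index with $B_l \cap A_{2j} \ne \emptyset$. If $|B_l \cap A_{2j}| \ge |A_{2j-1}|$, then $B_l$ already contains every blue matching vertex in $A_0 \cup \dots \cup A_{2j}$, giving $|V(M_r) \cap B_l| \ge \sum_{i=1}^{j} |A_{2i-1}|$, and the local bound yields $|V(P_r) \cap B_l| \le \sum_{i=1}^{j} |A_{2i-1}| + c$. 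Otherwise $B_l \cap A_{2j}$ lies within the first $|A_{2j-1}|$ vertices of $A_{2j}$, which are exactly its matching vertices, so $|V(M_r) \cap B_l| = \sum_{i=1}^{j-1} |A_{2i-1}| + |B_l \cap A_{2j}|$; applying the local bound at $j-1$ together with the trivial estimate $|V(P_r) \cap (B_l \cap A_{2j})| \le |B_l \cap A_{2j}|$ again delivers $|V(P_r) \cap B_l| \le |V(M_r) \cap B_l| + c$.

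Adding the red and blue contributions gives the pointwise inequality and hence the claim for $P_r$. The argument for $P_b$ is structurally identical, using the dual local bound that a red vertex in $A_{2i-1}$ has blue neighbours only in $A_0 \cup A_2 \cup \dots \cup A_{2i-2}$, together with the fact that every blue vertex lies in $V(M_b)$; under this swap, the blue half of $f([t])$ is handled trivially while the red half requires the case split on where $R_k$ truncates the last red block. The one non-mechanical point is observing that the threshold $|A_{2j-1}|$ in the case distinction is precisely the size of the matching portion of $A_{2j}$, which is what makes both cases close; the rest is careful bookkeeping.
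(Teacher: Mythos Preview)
Your argument is correct and rests on the same two ingredients as the paper's proof: (i) every blue vertex on a red path has its path neighbours among the red vertices in strictly earlier blocks, and (ii) $f([t])$ always decomposes as an initial segment $B_l$ of the blue vertices together with an initial segment $R_k$ of the red vertices. The paper packages these slightly differently: instead of your direct case split on where $B_l$ truncates $A_{2j}$, it argues by a minimal counterexample $k$, which forces $f(k)$ to be a blue non-matching vertex and thereby lands automatically in your Case~1; and instead of double-counting degrees it uses the successor map on $P_r$ to obtain an injection from blue vertices into earlier red blocks, which yields the pointwise inequality with no additive constant. Your $+c$ is harmless for the $\limsup$, and in fact a closer look at your double-count (one endpoint, integer values) already gives $c=0$. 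So the two proofs are essentially the same, with the paper's contradiction-at-minimal-$k$ version being a bit more compact.
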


\begin{claim}\label{cla:upper-bound}
	We have $$\bar{d}(M_r;f),~\bar{d}(M_b;f)  \leq \frac{q^2+2q-1}{q^2+3q-2}.$$
\end{claim}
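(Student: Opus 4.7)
The plan is to reduce both bounds to an asymptotic calculation involving the positions $B_k, R_k$ of $b^*_k, r^*_k$ in the blue and red orderings of $V(G)$. Observing that $\mathbb{N}\setminus V(M_r)=\{b^*_k\}_{k\ge1}$ and $\mathbb{N}\setminus V(M_b)=\{r^*_k\}_{k\ge1}$, we rewrite the density quotients as $|V(M_r)\cap f([t])|/t=1-s(t)/t$ and $|V(M_b)\cap f([t])|/t=1-s'(t)/t$, where $s(t):=|\{b^*_k\}\cap f([t])|$ and $s'(t):=|\{r^*_k\}\cap f([t])|$. Hence it suffices to show that under the reordering $f$ the sequences $\{b^*_k\}$ and $\{r^*_k\}$ each have lower density at least $(q-1)/(q^2+3q-2)$.

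By construction of $f$, the vertex $b^*_k$ is enumerated at the end of the $k$-th blue phase, at time $B_k+R_{k-1}$ (setting $R_0:=0$), and $r^*_k$ is enumerated at the end of the $k$-th red phase, at time $B_k+R_k$. Thus $s$ is the step function equal to $k$ on the interval $t\in[B_k+R_{k-1},\,B_{k+1}+R_k-1]$, and since $s(t)/t$ is minimised on each such interval at the right endpoint, we obtain
\[
\bar d(M_r;f)=1-\liminf_{k\to\infty}\frac{k}{B_{k+1}+R_k},\qquad \bar d(M_b;f)=1-\liminf_{k\to\infty}\frac{k}{B_{k+1}+R_{k+1}}.
\]
It therefore suffices to show that both liminfs are at least $(q-1)/(q^2+3q-2)$.

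To estimate $B_{k+1}+R_k$ we analyse $B_{k+1}$ and $R_k$ block-by-block. The $b^*$-vertices in $A_{2i}$ are its last $|A_{2i}|-|A_{2i-1}|$ entries, and the $r^*$-vertices in $A_{2i+1}$ are its last $|A_{2i+1}|-|A_{2i}|$ entries; with $|A_i|=\lfloor q^i\rfloor$, the cumulative counts $k_b(m):=|\{b^*_k\}\cap\bigcup_{i\le m}A_{2i}|$ and $k_r(m):=|\{r^*_k\}\cap\bigcup_{i\le m}A_{2i+1}|$ are asymptotically $q^{2m+1}/(q+1)$ and $q^{2m+2}/(q+1)$ respectively. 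For each $k$, locate the index $m$ with $b^*_{k+1}\in A_{2m+2}$ and the index $j\in\{m,m+1\}$ with $r^*_k\in A_{2j+1}$; this yields a short case analysis in which $B_{k+1}$ and $R_k$ can be written explicitly as rational functions of the $|A_i|$. A geometric-sum calculation then verifies $k/(B_{k+1}+R_k)\ge(q-1)/(q^2+3q-2)-o(1)$, with equality attained in the limit when $k=k_b(m)$ (so that $b^*_{k+1}$ has just entered a fresh block $A_{2m+2}$). The argument for $M_b$ is symmetric and gives equality when $k=k_r(m)$ (so that $r^*_{k+1}$ has just entered $A_{2m+3}$).

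The main obstacle is the case analysis itself: one must track the interleaving of the $b^*$- and $r^*$-blocks and keep the dominant terms on the correct side of the inequality across block boundaries, noting that individual densities can slightly exceed the limit value (so the bound really is on the \emph{limsup}, not on every finite value). Floor-function discrepancies contribute $O(1)$ error per block and $O(\log_q k)$ cumulatively, which is $o(k)$ and is absorbed into the $o(1)$ term.
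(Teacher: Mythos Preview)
Your approach is correct and essentially the same as the paper's. Both proofs rest on the observation that $\N\setminus V(M_r)=\{b_k^*\}$ and $\N\setminus V(M_b)=\{r_k^*\}$, compute the exact position of each $b_k^*$ and $r_k^*$ in the ordering $f$ (your $B_k+R_{k-1}$ and $B_k+R_k$ are precisely the paper's $\ell_b(k)+\ell_r(k-1)$ and $\ell_b(k)+\ell_r(k)$), and then reduce the upper-density bound to a $\liminf$ of the ratio $k/(\text{position})$.

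The only notable difference is in bookkeeping: the paper partitions $\N$ by the intervals between consecutive $r_t^*$'s and uses the single lower bound ``at least $t-1$ vertices outside $M_r$'' to treat both matchings simultaneously, whereas you handle $M_r$ and $M_b$ with separate (and, for $M_r$, slightly sharper) interval decompositions. The paper then locates the extremal $t$ by rewriting $1-(t-1)/(\ell_r(t)+\ell_b(t)-1)$ so that the numerator is piecewise constant and the denominator increasing, which pins down the critical values explicitly and carries out the geometric sum in full; your proposal gestures at the same computation (``a short case analysis'' and ``geometric-sum calculation'') but does not execute it. That omitted calculation is routine and your identification of the extremal case $k=k_b(m)$ is correct, so this is incompleteness of write-up rather than a genuine gap.
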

We can easily derive \cref{thm:upper-bound} from these two claims.
Note that the rational function in Claim~\ref{cla:upper-bound} evaluates to $\rioflat$ at $q \coloneqq \sqrt{2}+1$.
It then follows from Claim~\ref{cla:faithful} and~\ref{cla:upper-bound}, that every monochromatic path $P$ in $G$ satisfies $ \bar{d}(P;f) \leq \rioflat$.
Thus we can define the desired colouring of $K_{\N}$, by colouring each edge $ij$ with the colour of the edge $f(i)f(j)$ in $G$.

It remains to prove Claim~\ref{cla:faithful} and~\ref{cla:upper-bound}.
The intuition behind Claim~\ref{cla:faithful} is that in every monochromatic red path $P_r$ there is a red matching with the same vertex set, and that $M_r$ has the largest upper density among all red matchings, as it contains every red vertex and has the largest possible upper density of blue vertices.
Note that the proof of Claim~\ref{cla:faithful} only uses the property that $f$ preserves the order of the vertices inside $R$ and inside $B$.
\begin{proof}[Proof of Claim~\ref{cla:faithful}]
	We will show $\bar{d}(P_r;f)\leq \bar{d}(M_r;f)$.
	(The other case is analogous.)
	We prove that, for every positive integer $k$, we have $|V(P_r)\cap f([k])|\leq |V(M_r)\cap f([k])|$.
	Assume, for contradiction, that this is not the case and let $k$ be the minimum positive integer for which the inequality does not hold.
	Every red vertex is saturated by $M_r$, so $|V(P_r)\cap f([k])\cap B|>|V(M_r)\cap f([k])\cap B|$. By the minimality of $k$, $f(k)$ must be in $P_r$ but not in $M_r$, and in particular it must be blue. 
	
	Let $f(k)\in A_{2i}$. Since $f(k) \not \in M_r$, we know that $f(k)$ is not among the first $|A_{2i-1}|$ vertices of $A_{2i}$. Therefore, since $f$ preserves the order of the vertices inside $B$, $f([k])$ contains the first $|A_{2i-1}|$ blue vertices in $A_{2i}$, and hence
	\begin{equation}\label{eq:1}
	|V(P_r)\cap f([k])\cap B|>|V(M_r)\cap f([k])\cap B|=\sum_{j=1}^i|A_{2j-1}|. 
	\end{equation}
	On the other hand, every edge between two blue vertices is blue, so the successor of every blue vertex in $P_r$ is red, and in particular there is a red matching between $V(P_r)\cap B$ and $R$ saturating $V(P_r)\cap B$. So by \eqref{eq:1}, the number of red neighbours of $V(P_r)\cap f([k])\cap B$ is at least $|V(P_r)\cap f([k])\cap B|>\sum_{j=1}^i|A_{2j-1}|$.
	Observe that by the definition of $f$, we have $V(P_r)\cap f([k])\cap B\subseteq \bigcup_{j=0}^iA_{2j}$. 
	Hence the red neighbourhood of $V(P_r)\cap f([k])\cap B$ is contained in $\bigcup_{j=1}^iA_{2j-1}$, 
	a contradiction.
\end{proof}
\begin{proof}[Proof of Claim~\ref{cla:upper-bound}]
	Let $\ell_r(t)$ and $\ell_b(t)$ denote the position of $r_t^*$ among the red vertices and of $b_t^*$ among the blue vertices, respectively.
	In other words, let $\ell_r(t)=i$ where $r_t^*=r_i$ and $\ell_b(t)=j$ where $b_t^*=b_j$ (so for example in Figure \ref{fig:geo}, $\ell_r(4)=9$ and $\ell_b(4)=14$). Note that  $f(\ell_b(t)+\ell_r(t))=r_t^*$, so for $\ell_b(t-1)+\ell_r(t-1)\leq k \leq \ell_b(t)+\ell_r(t)-1$, $f([k])$ has exactly $t-1$ vertices outside of $M_b$ and at least $t-1$ vertices outside of $M_r$.  As a consequence, we obtain
	\begin{equation}\label{ub}
	\bar{d}(M_r;f),~\bar{d}(M_b;f)\leq \limsup_{k \to \infty} (1 - h(k))= \limsup\limits_{t\rightarrow\infty} \left(1-\frac{t-1}{\ell_r(t)+\ell_b(t)-1}\right),
	\end{equation}
    where $h(k) = (t-1)/k$ if $ \ell_b(t-1) + \ell_r(t-1) \leq k \leq \ell_b(t) + \ell_r(t)-1$. 
	It is easy to see that
	\begin{align*}
	\ell_r(t) = t+\sum\limits_{j=0}^i|A_{2j}|\quad &\text{for}\quad\sum\limits_{j=0}^{i-1}(|A_{2j+1}|-|A_{2j}|)<t\leq \sum\limits_{j=0}^i(|A_{2j+1}|-|A_{2j}|), \text{ and}\\
	\ell_b(t) = t+\sum\limits_{j=1}^i|A_{2j-1}|\quad &\text{for}\quad\sum\limits_{j=1}^{i-1}(|A_{2j}|-|A_{2j-1}|)<t-|A_0|\leq\sum\limits_{j=1}^i(|A_{2j}|-|A_{2j-1}|).
	\end{align*}
    Note that $\ell_r(t)-t$ and $\ell_b(t)-t$ are piecewise constant and non-decreasing. We claim that, in order to compute the right hand side of \eqref{ub}, it suffices to consider values of $t$ for which $\ell_r(t)-t>\ell_r(t-1)-(t-1)$ or $\ell_b(t)-t>\ell_b(t-1)-(t-1)$. This is because we can write \[1-\frac{t-1}{\ell_r(t)+\ell_b(t)-1}=\frac12+\frac{(\ell_r(t)-t)+(\ell_b(t)-t)+1}{2(\ell_r(t)+\ell_b(t)-1)}.\] In this expression, the second fraction has a positive, piecewise constant numerator and a positive increasing denominator. Therefore, the local maxima are attained precisely at the values for which the numerator increases.
    We will do the calculations for the case when $\ell_r(t)-t>\ell_r(t-1)-(t-1)$ (the other case is similar), in which we have
	\begin{align*}
	t &= 1+\sum\limits_{j=0}^{i-1}(|A_{2j+1}|-|A_{2j}|)=1+\sum\limits_{j=0}^{i-1}(1+o(1))q^{2j}(q-1)=(1+o(1))\frac{q^{2i}}{q+1},\\
	\ell_r(t) &= t+\sum\limits_{j=0}^i|A_{2j}|=(1+o(1))\left(\frac{q^{2i}}{q+1}+\sum\limits_{j=0}^iq^{2j}\right)=(1+o(1))\frac{(q^2+q-1)q^{2i}}{q^2-1}, \text{ and}\\
	\ell_b(t) &= t+\sum\limits_{j=1}^i|A_{2j-1}|=(1+o(1))\left(\frac{q^{2i}}{q+1}+\sum\limits_{j=1}^iq^{2j-1}\right)=(1+o(1))\frac{(2q-1)q^{2i}}{q^2-1}. 
	\end{align*}
	Plugging this into \eqref{ub} gives the desired result.
\end{proof}
    
\section{Lower bound}\label{sec:lower-bound}
This section is dedicated to the proof of \cref{thm:lower-bound}. 
A \emph{total colouring} of a graph $G$ is a colouring of the vertices and edges of $G$. Due to an argument of Erd\H{o}s and Galvin, the problem of bounding the upper density of monochromatic paths in edge coloured graphs can be reduced to the problem of bounding the upper density of monochromatic path forests in totally coloured graphs.

\begin{definition}[Monochromatic path forest]
	Given a totally coloured graph $G$, a forest $F \subset G$ is said to be a \emph{monochromatic path forest} if $\Delta(F) \leq 2$ and there is a colour $c$ such that all leaves, isolated vertices, and edges of $F$ receive colour $c$.
\end{definition}

\begin{lemma}\label{lem:path-forest}
	For every $\gamma>0$ and $k \in \N$, there is some $n_0 = n_0(k,\gamma)$ so that the following is true for every $n \geq n_0$. For every total $2$-colouring of $K_n$, there is an integer $ t \in [k,n]$ and a monochromatic path forest $F$ with $d(F,t) \geq \rioflat - \gamma$. 
\end{lemma}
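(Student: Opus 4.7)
The strategy is to apply Szemerédi's regularity lemma to reduce the lemma to a finite combinatorial statement on a bounded-size reduced totally $2$-coloured complete graph, and then to carry out an extremal / ``stability'' argument matching the upper bound construction of \cref{sec:upper-bound}.

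First I would apply the regularity lemma with parameters $\varepsilon \ll \gamma$, obtaining an equitable partition $V_1, \dots, V_m$ (with $m = m(\gamma,k)$ bounded) such that all but an $\varepsilon$-fraction of the pairs $(V_i, V_j)$ are $\varepsilon$-regular in both colour classes. I then form a totally $2$-coloured reduced graph $R$ on $[m]$ by assigning each edge the majority colour on the corresponding regular dense pair, and each vertex the majority vertex colour of its cluster. A standard connected-matching-to-path blow-up argument then yields the following lifting: if on some prefix $[s]$ of $R$ there is a monochromatic connected matching $M$ of colour $c$ together with a set $S$ of $c$-coloured isolated vertices with $(|V(M)|+|S|)/s \ge \rioflat - \gamma/2$, then $K_n$ contains a colour-$c$ path forest $F$ with $d(F,t) \ge \rioflat - \gamma$ for some $t \in [k,n]$, namely $t \approx s \cdot n/m$. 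In this way \cref{lem:path-forest} reduces to an analogous combinatorial statement on the reduced graph $R$.

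The heart of the argument is then the reduced-graph claim: in every totally $2$-coloured $K_m$ with $m$ large there is a prefix $[s]$ on which some colour admits a connected monochromatic matching plus same-coloured isolated vertices covering at least a $\rioflat - \gamma/2$ fraction of $[s]$. I would establish this by classifying prefixes according to their vertex-colour split $(|B \cap [s]|, |R \cap [s]|)$ and their monochromatic component structure, using König's theorem to pass freely between large monochromatic components and connected matchings of almost the same size. As already observed in the proof of \cref{cla:faithful}, the extremal red path-forest-like structure matches each odd block $A_{2i-1}$ to the beginning of the following even block and adds the uncovered blue vertices as isolated ones, and symmetrically for blue. This naturally yields a linear program whose variables are the cumulative red / blue vertex counts across consecutive scales of the prefix and whose constraints encode the failure of both colours to meet the target density; the key claim is that its optimum equals exactly $\rioflat$, attained by the geometric profile $|A_i| = q^i$ with $q = \sqrt{2}+1$, the defining equation $q^2 = 2q+1$ arising as the KKT condition at the optimum.

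The main obstacle, I expect, is this extremal step. Its essential difficulty is the global character of the extremal configuration: one must show that if neither colour admits a path-forest-like structure of density $\rioflat - \gamma$ on \emph{any} prefix of $R$, then the red / blue density profile of $R$ must closely track the geometric blocks $A_0, A_1, \dots$ at \emph{every} scale, and not merely prefix by prefix. Handling this will likely require an inductive / potential-function argument running simultaneously over prefix length and over which colour currently dominates, so as to rule out hybrid configurations with non-geometric alternation. Once this structural claim is in place, the regularity-based lifting described above produces the required monochromatic path forest and index $t$.
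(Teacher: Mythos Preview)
Your regularity-then-lift scaffolding is exactly what the paper does, but several details and the core extremal step diverge.

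On the scaffolding: for a monochromatic \emph{path forest} you do not need the reduced matching to be connected; the paper works with the weaker notion of a \emph{monochromatic simple forest}, namely a matching of colour $c$ together with $c$-coloured isolated vertices, with the extra requirement that \emph{each matching edge has at least one $c$-coloured endpoint}. That last clause, which you omit, is what guarantees that the almost-spanning path in each regular pair can be taken with both ends in a $c$-coloured cluster, so that the blown-up path forest has all leaves of colour~$c$. You also need the regularity partition to respect the linear order of $[n]$; the paper arranges this by first cutting $[n]$ into $\ell$ consecutive intervals, intersecting with $\{R,B\}$, and feeding this as the initial partition into the regularity lemma, so that each cluster lies inside a short interval and prefixes of the reduced graph correspond, up to $O(n/\ell)$, to prefixes of $[n]$.

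On the extremal step, which you correctly flag as the heart of the matter, your plan is genuinely different and, as stated, too vague to count as a proof. The paper does not set up a linear program over block profiles or invoke any KKT-type optimality of the geometric construction. Instead it throws away all edges inside $R$ and inside $B$, looks at the blue-degree sequence $a_1\le\dots\le a_{|R|}$ of the red vertices into $B$, and shows via K\"onig's theorem that a red (resp.\ blue) simple forest of density roughly $\ell/(\ell+t)$ exists on the prefix $[\ell+t]$ whenever $a_i<i+t$ on an initial segment (resp.\ $a_j>j-t$ on an initial segment). The whole problem then collapses to a statement about non-decreasing integer sequences: for some $t$ in a bounded multiplicative window, $\ell^+(t)+\ell^-(t)\ge(4+\sqrt8-\gamma)t$. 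This is \cref{lem:oscillation}, and its proof is a self-contained recurrence argument: any counterexample forces a subsequence satisfying $a'_{j+1}\le(\rho-2)a'_j-2(a'_1+\dots+a'_{j-1})$, equivalently the linear recurrence $b_{i+1}=(\rho-1)b_i-\rho b_{i-1}$, whose characteristic polynomial has non-real roots for $\rho<3+\sqrt8$ and hence eventually becomes negative. The number $\sqrt2+1$ does appear, but as the threshold $\rho=3+\sqrt8$ where the discriminant $\rho^2-6\rho+1$ vanishes, not via an LP optimum. Your inductive/potential-function idea might be reworkable into something like this, but as written it does not yet contain the mechanism that actually closes the argument.
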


Some standard machinery related to Szemer\'edi's regularity lemma, adapted to the ordered setting, will allow us to reduce the problem of bounding the upper density of monochromatic path forests to the problem of bounding the upper density of monochromatic simple forests.

\begin{definition}[Monochromatic simple forest]
	Given a totally coloured graph $G$, a forest $F \subset G$ is said to be a \emph{monochromatic simple forest} if $\Delta(F) \leq 1$ and there is a colour $c$ such that all edges and isolated vertices of $F$ receive colour $c$ and at least one endpoint of each edge of $F$ receives colour $c$.
\end{definition}
\begin{lemma}\label{lem:simple-forest}
	For every $\gamma > 0$, there exists $k_0,N \in \N$ and $\alpha > 0$ such that the following holds for every integer $k \geq k_0$.
	Let $G$ be a totally $2$-coloured graph on $kN$ vertices with minimum degree at least $(1-\alpha)kN$.
	Then there exists an integer  $t \in [k/8,kN]$ and a monochromatic simple forest $F$ such that $d(F,t) \geq \rioflat - \gamma$.
\end{lemma}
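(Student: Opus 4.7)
Writing $\rho = \rioflat$ and $n = kN$, the plan is to reformulate the lemma as an extremal optimisation. For a totally $2$-coloured graph $G$, a colour $c \in \{\mathrm{red}, \mathrm{blue}\}$, and a prefix $[t]$, an elementary greedy argument identifies the maximum of $|V(F) \cap [t]|$ over monochromatic simple forests $F$ of colour $c$ as
\[
M_c(t) = |V_c \cap [t]| + \mu_c(t),
\]
where $V_c \subseteq V(G)$ is the set of vertices of colour $c$ and $\mu_c(t)$ is the maximum matching in the $c$-coloured subgraph of $G$ using edges from $V_c$ to $V_{\bar c} \cap [t]$. Indeed, every $c$-vertex in $[t]$ may be placed in $F$ as either isolated or matched, and the matching picks up the additional $\bar c$-vertices. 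Writing $r_t = |V_r \cap [t]|$ and $b_t = |V_b \cap [t]|$, the goal becomes to exhibit $t \in [k/8, n]$ with $\max(M_r(t), M_b(t)) \geq (\rho - \gamma) t$. Suppose for contradiction no such $t$ exists.

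Choosing $\alpha \ll 1/N$ and using $t \geq k/8$, the minimum-degree hypothesis guarantees that every vertex in $V_c \cap [t]$ has at most $o(t)$ non-neighbours in $V_{\bar c} \cap [t]$; hence the bipartite graph on $V_r \cap [t]$ and $V_b \cap [t]$ admits a matching of size $\min(r_t, b_t) - o(t)$, which, split by edge colour, gives $\mu_r(t) + \mu_b(t) \geq \min(r_t, b_t) - o(t)$. Adding this to the two failure inequalities yields only $\min(r_t, b_t) \leq (2\rho - 1)t + o(t) \approx 0.744\, t$, which is vacuous since $\min(r_t, b_t) \leq t/2$. Thus the argument must exploit more than the bare size of a bipartite matching; it must use the interplay between several choices of $t$.

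I would therefore leverage the flexibility in $t$. As $t$ varies over $[k/8, n]$, the ratios $r_t/t$ and $\mu_c(t)/t$ trace out quasi-continuous curves, so one can pick $t$ where $r_t/t$ (resp.\ $b_t/t$) matches a value dictated by the geometric construction of Section~\ref{sec:upper-bound} with $q = \sqrt{2}+1$. At such a $t$, a small $\mu_c(t)$ corresponds via K\"onig's theorem to a small vertex cover in the $c$-coloured bipartite subgraph between $V_c \cap [t]$ and $V_{\bar c} \cap [t]$; the complementary colour must then dominate most edges there, which combined with the min-degree hypothesis produces a large $\mu_{\bar c}(t')$ at some nearby prefix $t'$ where the new $\bar c$-vertices tip the balance. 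Turning these competing constraints into a joint optimisation should reduce the problem to a quadratic whose optimum matches $\rho = 4q/(5q-1)$.

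The main obstacle is the last step — extracting the tight constant $(12+2\sqrt{2})/17$ rather than a weaker value. The bounds $2/3$, $3/4$, and $(9+\sqrt{17})/16$ of previous work all arise from cruder versions of exactly this optimisation, so obtaining $\rho$ will require a careful multi-prefix analysis instead of a single K\"onig application. Concretely, I expect to set up an LP over the densities $r_t/t$, $b_t/t$, $\mu_r(t)/t$, $\mu_b(t)/t$ evaluated at two (or more) carefully chosen prefixes, coupled by the minimum-degree hypothesis and by the monotonicity of $|V_c \cap [t]|$ in $t$, whose optimum equals $\rho$ — mirroring the derivation of $\rho = (q^2 + 2q - 1)/(q^2 + 3q - 2)$ in Claim~\ref{cla:upper-bound}.
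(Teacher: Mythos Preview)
Your setup is correct and essentially matches the paper's Claims~\ref{obs:blue} and~\ref{obs:red}: writing the optimum as $|V_c\cap[t]|+\mu_c(t)$ and invoking K\"onig is exactly how the paper packages the matching side of the argument. You also correctly diagnose that a single-prefix K\"onig bound is far too weak.

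The genuine gap is that your proposal stops at ``set up an LP over a couple of prefixes and hope its optimum is $\rho$''. That is not a proof, and there is no evidence a finite LP over two or three prefixes suffices; indeed the earlier bounds $2/3$, $3/4$, $(9+\sqrt{17})/16$ you cite are precisely what such finite schemes produce. The paper's argument implicitly ranges over a one-parameter family of prefixes, and the tight constant comes out of a recursion, not a finite linear program.

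Concretely, the paper does not track $r_t/t$ and $\mu_c(t)/t$ directly. It fixes the non-decreasing sequence $a_1\le\dots\le a_{|R|}$ of blue degrees of the red vertices and, for each $t$, sets $\ell^+(t)=\min\{i:a_i\ge i+t\}$ and $\ell^-(t)=\min\{j:a_j\le j-t\}$. A short K\"onig argument (your $\mu_c$ observation, formalised as Claim~\ref{obs:stick-it-together}) shows that failure of both colours at the prefix $[\ell^+(t)+\ell^-(t)+t]$ forces $\ell^+(t)+\ell^-(t)<(4+\sqrt8-\gamma)t$. The real content is then a purely sequence-theoretic statement (Lemma~\ref{lem:oscillation}): any non-decreasing non-negative sequence with large oscillation about the diagonal admits some $t$ with $\ell^+(t)+\ell^-(t)\ge(4+\sqrt8-\gamma)t$. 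To prove this one extracts a strictly increasing subsequence $a'_1,a'_2,\dots$ which, under the failure hypothesis, satisfies
\[
a'_{j+1}\le(\rho-2)a'_j-2(a'_1+\dots+a'_{j-1}),\qquad \rho=3+\sqrt8-\gamma,
\]
and then observes that the companion linear recurrence $b_{i+1}=(\rho-1)b_i-\rho b_{i-1}$ has characteristic polynomial $x^2-(\rho-1)x+\rho$ with discriminant $\rho^2-6\rho+1<0$, hence non-real roots, so $b_m<0$ for some $m$ --- contradicting $a'_j\ge 0$. The number $4+\sqrt8$ (equivalently, $\rho$ hitting $3+\sqrt8$, where the discriminant vanishes) is what pins down the constant; this recursive mechanism is the idea your outline is missing.
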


The heart of the proof is Lemma~\ref{lem:simple-forest}, which we shall prove in Section~\ref{sec:simple-forest}. But first, in the next two sections, we show how to deduce Theorem~\ref{thm:lower-bound} from Lemmas~\ref{lem:path-forest} and~\ref{lem:simple-forest}.

\subsection{From path forests to paths}\label{sec:proof-lower-bound}
In this section we use Lemma~\ref{lem:path-forest} to prove Theorem~\ref{thm:lower-bound}.
Our exposition follows that of Theorem 1.6 in~\cite{DM16}.
\begin{proof}[Proof of Theorem~\ref{thm:lower-bound}]
	Fix a $2$-colouring of the edges of $K_\N$ in red and blue. 
	We define a $2$-colouring of the vertices
	by colouring $n \in \N$ red if there are infinitely many $m \in \N$ such that the edge $nm$ is red and blue otherwise.
	
	\textbf{Case 1.} Suppose there are vertices $x$ and $y$ of the same colour, say red, and a finite set $S \subset \N$ such that there is no red path disjoint from $S$ which connects $x$ to $y$.
	
	We partition $\N \sm S$ into sets $X, Y, Z$, where $x' \in X$ if and only if there is a red path, disjoint from $S$, which connects $x'$ to $x$ and $y' \in Y$ if and only if there is a red path disjoint from $S$ which connects $y$ to $y'$. 
	Note that every edge from $X \cup Y$ to $Z$ is blue. Since $x$ and $y$ are coloured red, both $X$ and $Y$ are infinite, and by choice of $x$ and $y$ all edges in the bipartite graph between $X$ and $Y \cup  Z$ are blue. Hence
	there is a blue path with vertex set $X \cup Y \cup  Z = \N \sm S$.
	
	\textbf{Case 2.} Suppose that for every pair of vertices $x$ and $y$ of the same colour $c$, and every finite set $S \subset \N$, there is a path from $x$ to $y$ of colour $c$ which is disjoint from $S$.
	
	Let $\gamma_n$ be a sequence of positive reals tending to zero, and let $a_n$ and $k_n$ be increasing sequences of integers such that
	\begin{equation*}
	\text{$a_n \geq n_0(k_n,\gamma_n)$ and  $k_n/(a_1+\dots+a_{n-1}+k_n) \rightarrow 1$,}
	\end{equation*}
	where $n_0(k,\gamma)$ is as in Lemma~\ref{lem:path-forest}.
	Let $\N = (A_i)$ be a partition of $\N$ into consecutive intervals with $|A_n| = a_n$.
	By Lemma~\ref{lem:path-forest} there are monochromatic path forests $F_n$ with $V(F_n) \subset A_n$ and initial segments $I_n \subset A_n$ of length at least $k_n$ such that
	$$|V(F_n) \cap I_n| \geq \left(\frac{12+\sqrt{8}}{17}-\gamma_n\right)|I_n|.$$
	It follows that for any $G \subset K_\N$ containing infinitely many $F_n$'s we have
	$$\bar{d}(G) \geq \limsup_{n\rightarrow \infty} \frac{|V(F_n) \cap I_n|}{a_1 + \dots + a_{n-1} + |I_n|} \geq \limsup_{n\rightarrow \infty} \frac{12+\sqrt{8}}{17} - \gamma_n = \frac{12+\sqrt{8}}{17}.$$
	
	By the pigeonhole principle, there are infinitely many $F_n$'s of the same colour, say blue. We will recursively construct a blue path $P$ which contains infinitely many of these $F_n$'s. To see how this is done, suppose we have constructed a finite initial segment $p$ of $P$. We will assume as an inductive hypothesis that $p$ ends at a blue vertex $v$.
	Let $n$ be large enough that $\min (A_n)$ is greater than every vertex in $p$, and $F_n$ is blue. Let $F_n = \{P_1, \dots, P_s\}$ for some $s \in \N$ and let $w_i, w_i'$ be the endpoints of the path $P_i$ (note that $w_i$ and $w_i'$ could be equal) for every $i \in [s]$.
	By the case assumption, there is a blue path $q_1$ connecting $v$ to $w_1$, such that $q_1$ is disjoint from $A_1 \cup \dots \cup  A_n$. Similarly, there is a blue path $q_2$ connecting $w_1'$ to $w_2$, such that $q_2$ is disjoint from $A_1 \cup \dots \cup A_n \cup \{q_1\}$. Continuing in this fashion, we find disjoint blue paths $q_3, \dots , q_s$ such that $q_i$ connects $w_{i-1}'$ to $w_i$. Hence, we can extend $p$ to a path $p'$ which contains all of the vertices of $F_n$ and ends at a blue vertex.
\end{proof}

\subsection{From simple forests to path forests}\label{sec:path-forest}
In this section we use Lemma~\ref{lem:simple-forest} to prove Lemma~\ref{lem:path-forest}.  
The proof is based on Szemer\'edi's Regularity Lemma, which we introduce below.
The main difference to standard applications of the Regularity Lemma is, that we have to define an ordering of the reduced graph, which approximately preserves densities.
This is done by choosing a suitable initial partition.

Let $G=(V,E)$ be a graph and~$A$ and~$B$ be non-empty, disjoint subsets of $V$.
We write $e_G(A,B)$ for the number of edges in~$G$ with one vertex in~$A$ and one in~$B$ and define the \emph{density} of the pair $(A,B)$ to be $d_{G}(A,B)=e_G(A,B)/(|A||B|)$. 
The pair $(A,B)$ is \emph{$\eps$-regular} (in~$G$) if we have $|d_{G}(A',B') - d_{G}(A,B)| \leq \eps$ for all $A'\subseteq A$ with $|A'|\ge\eps|A|$ and $B'\subseteq B$ with $|B'|\ge\eps |B|$.
It is well-known (see for instance \cite{Hax}) that dense regular pairs contain almost spanning paths. We include a proof of this fact for completeness.

\begin{lemma}\label{fac:almost-spanning-path}
For $0 < \eps < 1/4$ and $d \geq 2\sqrt{\eps}+\eps$, every $\eps$-regular pair $(A,B)$ with density at least $d$ contains a path with both endpoints in $A$ and covering all but at most $2\sqrt{\eps}|A|$ vertices of $A \cup B$.
\end{lemma}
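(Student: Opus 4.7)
The proof is a standard longest-path argument in $\eps$-regular pairs, combined with P\'osa-style rotations. I would begin with a cleaning step: define $A_0 = \{a \in A : |N(a) \cap B| < (d-\eps)|B|\}$ and $B_0$ analogously. By $\eps$-regularity, $|A_0| \leq \eps|A|$ and $|B_0| \leq \eps|B|$, for otherwise $(A_0, B)$ or $(A, B_0)$ would witness a density drop contradicting regularity. Setting $A' = A \setminus A_0$ and $B' = B \setminus B_0$, every $a \in A'$ has at least $(d - \eps)|B| - |B_0| \geq (d - 2\eps)|B|$ neighbours in $B'$, and symmetrically for $B'$.

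Next, I would consider a longest path $P = a_0 b_1 a_1 \ldots b_k a_k$ lying in $A' \cup B'$ with both endpoints in $A'$. By maximality, every neighbour of $a_0$ in $B'$ already lies on $P$, so $|V(P) \cap B'| \geq (d - 2\eps)|B|$. P\'osa-rotating at $a_0$: for each $b_i \in N(a_0) \cap V(P)$, the rearrangement $a_{i-1} b_{i-1} \ldots a_0 b_i a_i \ldots a_k$ is a longest path on the same vertex set whose alternative $A'$-endpoint is $a_{i-1}$. Let $S$ be the set of these alternative endpoints; then $S \subseteq A' \cap V(P)$ and $|S| \geq (d - 2\eps)|B| - 1$.

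Now suppose towards contradiction that $|V(P)| < |A| + |B| - 2\sqrt{\eps}|A|$. Since $|V(P) \cap A| = |V(P) \cap B| + 1$, a short computation shows $|B \setminus V(P)| \geq \sqrt{\eps}|A| + (|B|-|A|)/2$, hence $|B' \setminus V(P)| \geq \eps|B|$ in the regime $|B| \geq |A|$ where the lemma is nontrivial (the case $|A| > |B|$ is handled by the symmetric rotation at $a_k$ together with the fact that the missing vertices must then lie on the $A$-side as well). Since the hypothesis $d \geq 2\sqrt{\eps}+\eps$ also gives $|S| \geq (d-2\eps)|B| \geq \eps|B|$, I would apply $\eps$-regularity to the pair $(S, B' \setminus V(P))$: their density is at least $d - \eps > 0$, so there is an edge $sb^*$ with $s \in S$ and $b^* \in B' \setminus V(P)$. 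Prepending $sb^*$ to the rotated longest path ending at $s$ produces a strictly longer path with both endpoints in $A'$, contradicting maximality.

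The main obstacle is the P\'osa-rotation bookkeeping---verifying that the rotated objects are genuinely paths ending in $A'$ (they are, because all $A$-vertices on $P$ lie in $A'$ by construction), and confirming that $|S|$, $|B' \setminus V(P)|$, and the analogous set on the other side all exceed the $\eps$-regularity threshold. The numerical hypothesis $d \geq 2\sqrt{\eps}+\eps$ is tuned precisely so that $(d-2\eps) \geq \eps$ gives $|S| \geq \eps|B|$, and so that the gap $2\sqrt{\eps}|A|$ leaves at least $\eps|B|$ room in $B' \setminus V(P)$ for regularity to bite.
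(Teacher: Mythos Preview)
Your rotation approach is different from the paper's, which builds the path greedily: it maintains a path $P_k=a_1b_1\cdots a_k$ with the invariant that $a_k$ still has at least $\eps|B|$ neighbours in $B\setminus V(P_k)$, and uses $\eps$-regularity once per step to pick the next $a_{k+1}$ so that the invariant persists. This is shorter and avoids rotations entirely; it also makes the implicit hypothesis $|A|=|B|$ (used explicitly in the paper's proof and in the application) transparent, whereas your case split on $|A|\lessgtr|B|$ is unnecessary and, as written, not quite coherent.

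More importantly, your argument has a genuine gap at the line ``By maximality, every neighbour of $a_0$ in $B'$ already lies on $P$.'' You took $P$ to be longest only among paths with \emph{both endpoints in $A'$}. If $b\in N(a_0)\cap B'\setminus V(P)$, then $bP$ is longer, but it ends in $B'$, so this does not contradict your maximality. A small example shows the claim can fail: with $A'=\{a_0,a_1\}$, $B'=\{b_1,b_2\}$ and edges $a_0b_1,\,a_0b_2,\,a_1b_1$, the unique longest $A'$--$A'$ path is $a_0b_1a_1$, yet $b_2\in N(a_0)\setminus V(P)$. Consequently your lower bound on $|V(P)\cap B'|$ and the size of the rotation set $S$ are unjustified, and the final regularity step cannot be invoked.

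The gap is repairable, but not for free. One clean fix is to take instead a longest path in $A'\cup B'$ with no endpoint constraint; then maximality really does force all neighbours of each endpoint onto $P$, and a single P\'osa rotation at an endpoint (on whichever side it lies) together with regularity shows that the opposite part is almost covered. When both endpoints happen to lie in $A'$ you are done; otherwise you need one more rotation round to convert a $B'$-endpoint into an $A'$-endpoint while keeping the length. Alternatively, keep your $A'$--$A'$ maximality but, upon finding $b\in N(a_0)\cap B'\setminus V(P)$, rotate the extended path $bP$ at $b$ to produce a large set $T\subseteq B'$ of alternative endpoints, and apply regularity to $(T,\,A'\setminus V(P))$ to obtain a strictly longer $A'$--$A'$ path. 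Either way, the missing second rotation is exactly the step that makes the endpoint constraint compatible with the extension argument.
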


\begin{proof}
	We will construct a path  $P_k = (a_1b_1\dots a_{k})$ for every $ k =1, \ldots, \lceil (1- \sqrt \eps) |A| \rceil$ such that $B_k \coloneqq  N(a_k) \sm V(P_k)$ has size at least $\eps |B|$.
	As $d \geq \eps$, this is easy for $k = 1$. Assume now that we have constructed $P_k$ for some $ 1 \leq k < (1-\sqrt{\eps})|A|$. We will show how to extend $P_k$ to $P_{k+1}$. 
	By $\eps$-regularity of $(A,B)$, the set $\bigcup_{b \in B_k} N(b)$ has size at least $(1-\eps)|A|$.
	So $A' \coloneqq \bigcup_{b \in B_k} N(b) \sm V(P_k)$ has size at least $(\sqrt{\eps}-\eps) |A| \geq \eps |A|$.
	Let $B' = B \sm V(P_k)$ and note that $|B'|\geq \sqrt{\eps} |B|$ as $k < (1-\sqrt{\eps})|A|$ and $|A|=|B|$.
	By $\eps$-regularity of $(A,B)$, there exists $a_{k+1} \in A'$ with at least $(d-\eps)|B'| \geq 2\eps|B|$ neighbours in $B'$.
	Thus we can define $P_{k+1} = (a_1b_1\dots a_{k}b_{k}a_{k+1})$, where $b_{k} \in B_k \cap N(a_{k+1})$.
\end{proof}

{A family of disjoint subsets }$\{V_i\}_{i \in [m]}$ of a set $V$ is said to \emph{refine} {a partition} $\{W_j\}_{j \in [\ell]}$ of $V$ if, for all $i \in [m]$, there is some $j \in [\ell]$ with $V_i \subset W_j$.

\begin{lemma}[Regularity Lemma~\cite{KS96,Sze76}]\label{lem:regularity}
	For every $\eps>0$ and $m_0, \ell\geq 1$ there exists $M = M(\eps,m_0,\ell)$ such that the following holds. Let $G$ be a graph on $n  \geq M $ vertices whose edges are coloured in red and blue and let $d>0$.
	Let $\{W_i\}_{i \in [\ell]}$ be a partition of $V(G)$. Then there exists a partition $\{V_0, \dots, V_m\}$ of $V(G)$ and a subgraph $H$ of $G$ with vertex set $V(G) \sm V_0$ such that the following holds:
	\begin{enumerate}
		\item $m_0 \leq m \leq M$;
		\item $\{V_i\}_{i \in [m]}$ refines $\{W_i\cap V(H)\}_{i \in [\ell]}$;
		\item $|V_0| \leq \eps n$ and $|V_1| = \dots = |V_m| \leq \lceil\eps n \rceil$;
		\item $\deg_{H}(v) \geq \deg_G(v)-(d+\eps)n$ for each $v \in V(G) \sm V_0$;
		\item $H[V_i] $ has no edges for $i \in [m]$;
		\item \label{itm:reg-lemma-eps-regular} all pairs $(V_i,V_j)$ are $\eps$-regular and with density either 0 or at least $d$ in each colour in $H$.
	\end{enumerate}
\end{lemma}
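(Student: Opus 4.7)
The plan is to deduce this coloured, refinement-preserving version from the classical (uncoloured) Szemer\'edi Regularity Lemma via the standard \emph{refine then clean up} reduction. I would first pick auxiliary parameters $\eps'$, small compared with both $\eps$ and $1/\ell$, and $m_0^\ast \geq m_0 \ell$, and apply a two-colour variant of the standard lemma to $V(G)$. This two-colour variant, obtained either by running the index-increment proof with pair densities measured separately in each colour, or by applying the single-colour lemma to the red subgraph and then refining with it applied to the blue subgraph, yields a partition $\{U_0, U_1, \dots, U_{m^\ast}\}$ with $|U_0| \leq \eps' n$, equal part sizes, and every pair $(U_k, U_{k'})$ $\eps'$-regular in both colours.

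Next I would refine this partition according to $\{W_j\}_{j \in [\ell]}$: replace each class $U_k$ by the non-empty intersections $U_k \cap W_j$, discard any such intersection smaller than $\eps' |U_k|$, and then split each surviving piece into blocks of a common size $b$ chosen uniformly across all $k$. The old exceptional set $U_0$, the discarded small intersections, the leftover residues of integer division by $b$, and any vertices of too-low degree are absorbed into the new exceptional set $V_0$. The resulting partition $\{V_0, V_1, \dots, V_m\}$ refines $\{W_j\cap V(H)\}_{j\in[\ell]}$, has $m_0 \leq m \leq M$ for some $M = M(\eps, m_0, \ell)$, and satisfies the size bounds once the auxiliary parameters are chosen small enough. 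The key step is verifying that $\eps$-regularity is inherited by the refined pairs: if $V_i \subset U_k$ and $V_j \subset U_{k'}$ both have relative size at least $\eps'/\eps$, then any subsets $A \subset V_i$, $B \subset V_j$ of relative size $\eps$ have absolute size at least $\eps' |U_k|$ and $\eps' |U_{k'}|$, so $\eps'$-regularity of $(U_k, U_{k'})$ in each colour transfers to $\eps$-regularity of $(V_i, V_j)$, and the corresponding densities agree up to an additive $\eps'$.

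Finally I would build $H$ by deleting from $G$: (a) every edge incident to $V_0$; (b) every edge inside some $V_i$; (c) every edge in a non-$\eps$-regular pair $(V_i, V_j)$; and (d) in each remaining pair and each colour, all edges of that colour whenever its density in the pair is below $d$. A routine count shows that each vertex outside $V_0$ loses at most $\eps n + \eps n + \eps n + dn$ degree in total, giving condition \emph{(iv)} after rescaling $\eps$ at the outset; note that deletions of type (d) remove all edges of one colour in a pair and so do not damage $\eps$-regularity in the surviving colour. The only real obstacle is the bookkeeping, namely picking $\eps'$, $m_0^\ast$, and the block size $b$ so that conditions \emph{(i)} through \emph{(vi)} all hold simultaneously; the one non-trivial ingredient is the hereditary $\eps$-regularity from the previous paragraph.
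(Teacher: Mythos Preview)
The paper does not prove this lemma: it is stated with citations to Koml\'os--Simonovits and Szemer\'edi and used as a black box. There is therefore no ``paper's own proof'' to compare against.

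Your sketch is a reasonable outline of how one derives this coloured, degree-form variant with a prescribed initial partition from the basic regularity lemma. Two small remarks. First, the sentence ``every pair $(U_k,U_{k'})$ $\eps'$-regular in both colours'' overstates what the raw lemma gives you; only all but an $\eps'$-fraction of pairs are regular, and the irregular ones must be handled in your cleanup step (c) rather than assumed away. Second, the more common route in the literature is to build the initial partition $\{W_j\}$ into the index-increment argument from the outset (start the iteration with an equipartition that already refines $\{W_j\}$), which avoids the post-hoc refinement and the attendant bookkeeping with block sizes. Your refine-afterwards approach also works once the hereditary regularity step is carried out carefully, but it is slightly less direct.
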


Before we start with the proof, we will briefly describe the setup and proof strategy of Lemma~\ref{lem:path-forest}.
Consider a totally {$2$-coloured} complete graph $G=K_n$.
Denote the sets of red and blue vertices by $R$ and $B$, respectively.
For $\ell \geq 4,$ let $\{W_j\}_{j\in [\ell]}$ be a partition of $[n]$ such that each $W_j$ consists of at most $\lceil n/{\ell}\rceil$ subsequent vertices. 
The partition $\{W'_j\}_{j\in[2\ell]}$, with parts of the form $W_i\cap R$ and $W_i\cap B$, refines both $\{W_j\}_{j\in[\ell]}$ and $\{R,B\}$.
Suppose that $V_0 \cup \dots \cup V_m$ is a partition obtained from Lemma~\ref{lem:regularity} applied to $G$ and $\{W'_j\}_{j\in[2\ell]}$ with parameters $\eps$, $m_0$, $2\ell$ and $d$.
We define the $(\eps,d)$-\emph{reduced graph} $\RdG$ to be the graph with vertex set $V(\RdG) = [m]$ where $ij$ is an edge of $\RdG$ if and only if  if $(V_i,V_j)$ is an $\eps$-regular pair of density at least $d$ in the red subgraph of $H$ or in the blue subgraph of $H$.  Furthermore, we colour $ij$ red if $(V_i,V_j)$ is an $\eps$-regular pair of density at least $d$ in the red subgraph of $H$, otherwise we colour $ij$ blue. 
As $\{V_i\}_{i \in [m]}$ refines $\{R,B\}$, we can extend this to a total 2-colouring of $G'$ by colouring each vertex $i$ red, if $V_i \subset R$, and blue otherwise.
By relabelling the clusters, we can furthermore assume that $i<j$ if and only if $\max \{V_i\} < \max \{V_j\}$.
Note that, by choice of $\{W_j\}_{j\in [\ell]}$,  any two vertices in $V_i$ differ by at most $n/\ell$.
Moreover, a simple calculation (see \cite[Proposition 42]{KO}) shows that $\RdG$ has minimum degree at least $(1-d-3\eps)m$.

Given this setup, our strategy to prove Lemma~\ref{lem:path-forest} goes as follows.
First, we apply Lemma~\ref{lem:simple-forest} to obtain $t' \in [m]$ and a, red say, simple forest $F' \subset G'$ with $d(F',t') \approx \rioflat$.
Next, we turn $F'$ into a red path forest $F \subset G$.
For every isolated vertex $i \in V(F')$, this is straightforward as $V_i \subset R$ by the refinement property.
For every edge $ij \in E(F')$ with $i \in R$, we apply Lemma~\ref{fac:almost-spanning-path} to obtain a red path that almost spans $(V_i,V_j)$ and has both ends in $V_i$.
So the union $F'$ of these paths and vertices is indeed a red path forest.
Since the vertices in each $V_i$ do not differ too much, it will follow that $d(F,t) \approx \rioflat$ for $t = \max \{V_{t'}\}$.

\begin{proof}[Proof of Lemma~\ref{lem:path-forest}]
	Suppose we are given $\gamma>0$ and $k \in \mathbb{N}$ as input. Let $k_0,N \in \N$ and $\alpha >0$ be as in Lemma~\ref{lem:simple-forest} with input $\gamma/4$. We choose constants $d, \eps >0$ and $\ell,m_0 \in \N$ satisfying 
	\begin{equation*}
	\text{$2\sqrt{\eps}+\eps \leq 1/\ell,d \leq \alpha/8$ and $m_0 \geq 4N/d, 2k_0 N$.}
	\end{equation*}
	We obtain $M$ from  Lemma~\ref{lem:regularity} with input $\eps,m_0$ and $2\ell$.
	Finally, set $n_0 = 16 k  \ell  M  N $.
	
	Now let $n \geq n_0$ and suppose that $K_n$ is an ordered complete graph on vertex set $[n]$ and with a total $2$-colouring in red and blue.
	We have to show that there is an integer $t \in [k,n]$ and a monochromatic path forest $F$ such that $|V(F) \cap [t]| \geq (\rioflat - \gamma)t$.
	
	Denote the red and blue vertices by $R$ and $B$, respectively.
	Let $\{W'_j\}_{j \in [\ell]}$ refine $\{R,B\}$ as explained in the above setting.
	Let $\{V_0,\dots,V_m\}$ be a partition of $[n]$ with respect to $G=K_n$ and $\{W'_j\}_{j \in [\ell]}$ as detailed in Lemma~\ref{lem:regularity} with totally 2-coloured $(\eps,d)$-reduced graph $G''$ of minimum degree $\delta(G'') \geq (1-4d)m$.	
	Set $k'=\lfloor m/N \rfloor \geq k_0$	and observe that the subgraph $\RdG$ induced by $G''$ in $[k'N]$ satisfies $\delta(\RdG) \geq (1-8d)m \geq (1-\alpha)m$ as $m \geq 4N/d$.
	Thus we can apply Lemma~\ref{lem:simple-forest} with input $\RdG$, $k'$, $\gamma/4$ to obtain an integer $t' \in [k'/8,k'N]$ and a monochromatic (say red) simple forest $F' \subset \RdG$ such that $d(F',t') \geq \rioflat-\gamma/4$.
	
	Set $t = \max V_{t'} $.
	We have that $V_{t'} \subset W_j$ for some $j \in [\ell]$.
	Recall that $i<j$ if and only if $\max \{V_i\} < \max \{V_j\}$ for any $i,j \in [m]$.
	It follows that $V_i \subset [t]$ for all $i \leq t'$.
	Hence
	\begin{equation}\label{equ:t-geq-n/64N}
	t \geq t'|V_1| \geq \frac{k'}{8}|V_1| \geq \left\lfloor \frac{m}{N} \right\rfloor  \frac{(1-\eps)n}{8m} \geq \frac{n}{16N}. 
	\end{equation}
	This implies $t \geq k$ by choice of $n_0$. 
	Since $[t]$ is covered by $V_0 \cup W_j \cup \bigcup_{i \in [t']} V_i$, it follows that
	\begin{align}\label{equ:t-t'}
	t' |V_1| &\geq t -|V_0| - |W_j| \nonumber \\
	& \geq \left(1 - \eps\frac{n}{t} - \frac{4}{\ell}\frac{n}{t}\right)t\nonumber \\
	& \geq \left(1 -16\eps N - \frac{64N}{\ell}\right)t  ~~~~~\text{ (by }\eqref{equ:t-geq-n/64N}) \nonumber\\
	& \geq \left(1 - \frac{\gamma}{2} \right)t .
	\end{align}
	
	For every edge $ij \in E(F')$ with $V_i \subset R$, we apply Lemma~\ref{fac:almost-spanning-path} to choose a path $P_{ij}$ which starts and ends in $V_i$ and covers all but at most $2\sqrt{\eps}|V_{1}|$ vertices of each $V_i$ and $V_j$.
	We denote the isolated vertices of $F'$ by $I'$.
	For each $i \in I'$ we have $V_i \subset R$. Hence the  red path forest $F \coloneqq \bigcup_{i \in I'}V_i \cup \bigcup_{ij \in E(F')}P_{ij} \subset K_n$ satisfies
    
	\begin{align*}
	|V(F) \cap [t]|  &=  \sum_{i \in I'}|V_i\cap [t]| + \sum_{ij \in E(F')}|V(P_{ij})\cap [t]| \\
	&\geq   \sum_{i \in I' \cap [t']}|V_i| + \sum_{i \in V(F' - I') \cap [t'],}(|V_i|- 2\sqrt{\eps} |V_1|) \\
	&\geq (1-2\sqrt{\eps})|V_1| |V(F') \cap [t']| \\
	&\geq (1-2\sqrt{\eps}) \left(\frac{12+\sqrt{8}}{17}-\frac{\gamma}{4}\right) t' |V_1| \\
	&\overset{\eqref{equ:t-t'}}{\geq}  \left(\frac{12+\sqrt{8}}{17}-\gamma\right)t \nonumber
	\end{align*}
	as desired.
\end{proof}

\subsection{Upper density of simple forests}\label{sec:simple-forest}
In this section we prove Lemma~\ref{lem:simple-forest}. For a better overview, we shall define all necessary constants here.
Suppose we are given $\gamma'>0$ as input and set $\gamma = \gamma'/4$.
Fix a positive integer $N=N(\gamma)$ and let $0 < \alpha \leq \gamma/(8N)$.
The exact value of $N$ will be determined later on.
Let $k_0 = \lceil 8/\gamma \rceil$ and fix a positive integer $k \geq k_0$.
Consider a totally 2-coloured graph $G'$ on $n= kN$ vertices with minimum degree at least $(1-\alpha)n$.

Denote the sets of red and blue vertices by $R$ and $B$, respectively.
As it turns out, we will not need the edges inside $R$ and $B$.
So let $G$ be the spanning bipartite subgraph, obtained from $G'$ by deleting all edges within $R$ and $B$.
For each red vertex $v$, let $d_b(v)$ be the number of blue edges incident to $v$ in $G$.
Let $a_1\leq \dots\leq a_{|R|}$ denote the degree sequence taken by $d_b(v)$.
The whole proof of Lemma~\ref{lem:simple-forest} revolves around analysing this sequence.

Fix an integer $t=t(\gamma, N, k)$ and subset $R' \subset R$, $B' \subset B$.
The value of $t$ and nature of $R'$, $B'$ will be determined later.
The following two observations explain our interest in the sequence $a_1\leq \dots\leq a_{|R|}$.

\begin{claim}\label{obs:blue}
	If $a_j> j-t$ for all $1 \leq j \leq |R'|-1$, then there is a blue simple forest covering all but at most $t$ vertices of $R' \cup B$.
\end{claim}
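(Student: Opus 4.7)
\textbf{Proof plan for Claim~\ref{obs:blue}.} The plan is to reduce the existence of the desired blue simple forest to finding a near-perfect blue matching from $R'$ into $B$ inside the bipartite graph $G$, and then verify that such a matching exists via the defect form of Hall's theorem, with the degree-sequence hypothesis playing the role of the Hall condition.

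The reduction is straightforward. Given a blue matching $M$ in $G$ between a subset of $R'$ and $B$, let $F$ consist of the edges of $M$ together with every vertex of $B$ that $M$ does not saturate, each taken as an isolated vertex. Every edge of $F$ lies between $R'$ and $B$, so it has a (blue) endpoint in $B$, and every isolated vertex of $F$ lies in $B$ and is therefore blue; thus $F$ is a blue simple forest. The only vertices of $R' \cup B$ missed by $F$ are the $|R'| - |M|$ unmatched vertices of $R'$, so it suffices to exhibit a blue matching $M$ from $R'$ into $B$ with $|M| \geq |R'| - t$.

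By the defect form of Hall's theorem, such a matching exists whenever every $S \subset R'$ satisfies $|N_G(S) \cap B| \geq |S| - t$. To check this, fix $S \subset R'$ and set $j = |S|$. Each $v \in S$ satisfies $d_b(v) \leq |N_G(S) \cap B|$, so at least $j$ vertices of $R$ have $d_b$-value bounded by $|N_G(S) \cap B|$, which forces $a_j \leq |N_G(S) \cap B|$. When $1 \leq j \leq |R'| - 1$, the hypothesis gives $|N_G(S) \cap B| \geq a_j \geq j - t + 1$, so the Hall inequality holds with room to spare. In the boundary case $j = |R'|$ (assuming $|R'| \geq 2$), we appeal to the hypothesis at index $j - 1$: since the sequence is non-decreasing, $|N_G(R') \cap B| \geq a_{|R'|} \geq a_{|R'|-1} \geq |R'| - t$, as required. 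The corner cases $|R'| \in \{0,1\}$ are either vacuous or handled trivially by letting the lone red vertex be one of the at most $t$ uncovered vertices.

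I do not anticipate a serious obstacle: the content of the claim is essentially a Hall-type statement in bipartite disguise, and the only mildly non-routine point is that the hypothesis is stated only up to $j = |R'| - 1$, which is exactly why one has to bootstrap the case $j = |R'|$ by using monotonicity $a_{|R'|} \geq a_{|R'|-1}$. Beyond that, the proof is pure bookkeeping once one recognises that ignoring the edges inside $R$ and $B$ (as set up in this section) reduces the problem to matchings in the bipartite graph $G$ and makes unmatched blue vertices ``free'' as isolated vertices of the forest.
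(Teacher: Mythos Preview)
Your argument is correct, modulo one notational slip: since you are after a \emph{blue} matching, the neighbourhood in the defect Hall condition must be taken in the blue subgraph of $G$, not in $G$ itself (otherwise $d_b(v)\le |N_G(S)\cap B|$ is true but useless); once $N_G$ is read as ``blue neighbourhood'', everything goes through. The paper proves the claim by the same underlying idea but phrases it as a direct greedy matching---ordering $R'$ by $d_b$ and matching $v_t,\dots,v_{|R'|-1}$ one at a time, each step succeeding because $d_b(v_j)\ge a_j>j-t$ exceeds the number of blue vertices already used---rather than invoking Hall's theorem; the two arguments are essentially equivalent.
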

\begin{proof}
	We write $R' = \{v_1,\dots,v_{|R'|}\}$ such that $d_b(v_i) \leq d_b(v_j)$ for every ${1 \leq i \leq j \leq |R'|}$.
	By assumption, we have $d_b(v_j) \geq a_j > j-t$ for all $1 \leq j \leq |R'|-1$.
	Thus we can greedily select a blue matching containing $\{v_{t}, v_{t+1}, \dots, v_{|R'|-1}\}$, which covers all but $t$ vertices of $R'$.
	Together with the rest of $B$, this forms the desired blue simple forest.
\end{proof}
\begin{claim}\label{obs:red}
	If $a_i<i+t$ for all $1 \leq i \leq |B'| -t$, then there is a red simple forest covering all but at most $t+\alpha n$ vertices of $R \cup B'$.
\end{claim}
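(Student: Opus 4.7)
My plan is to reduce the claim to a bipartite matching problem between the red vertices and $B'$ and then apply the defect version of Hall's theorem. The red simple forest $F$ will consist of a large red matching $M$ between a well-chosen initial segment of the red vertices and $B'$, together with every remaining red vertex added as an isolated vertex. Since every edge of $F$ then has a red endpoint and every isolated vertex is red, $F$ is a valid red simple forest, and the only vertices of $R \cup B'$ it can fail to cover are the blue vertices of $B'$ left unmatched by $M$.

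To set up the matching, I list the red vertices so that $a_i = d_b(v_i)$ and set $R_0 = \{v_1, \dots, v_{|B'|-t}\}$, interpreted as empty (and the claim as immediate) when $|B'| \leq t$. Let $H$ be the bipartite graph on $R_0 \cup B'$ whose edges are precisely the red edges of $G'$. The minimum-degree hypothesis $\delta(G') \geq (1-\alpha)n$ tells me that each $v_i \in R_0$ has at most $\alpha n$ non-neighbours in $G'$, and by definition only $a_i$ blue edges into $B$, so $\deg_H(v_i) \geq |B'| - a_i - \alpha n$.

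The heart of the argument is checking the defect-Hall inequality $|N_H(S)| \geq |S| - \alpha n$ for every $S \subseteq R_0$. Given such an $S$, let $i^* = \min\{i : v_i \in S\}$; since $|S|$ distinct indices of $S$ must fit into $\{i^*, i^* + 1, \dots, |B'| - t\}$, I get $i^* \leq |B'| - t - |S| + 1$, and then the hypothesis $a_{i^*} < i^* + t$ yields $a_{i^*} \leq |B'| - |S|$. Hence
\[
|N_H(S)| \;\geq\; \deg_H(v_{i^*}) \;\geq\; |B'| - a_{i^*} - \alpha n \;\geq\; |S| - \alpha n.
\]
Defect Hall then delivers a matching $M$ in $H$ that saturates all but at most $\lfloor \alpha n \rfloor$ vertices of $R_0$, so $M$ covers at least $|B'| - t - \alpha n$ vertices of $B'$. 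Taking $F = M \cup \{v : v \in R \setminus V(M)\}$ leaves at most $t + \alpha n$ vertices of $R \cup B'$ uncovered, which is precisely the conclusion. The one non-routine step is the Hall calculation above, where the cancellation between $a_{i^*} < i^* + t$ and the pigeonhole bound $i^* \leq |B'| - t - |S| + 1$ collapses the defect all the way down to $\alpha n$; everything before and after it is bookkeeping.
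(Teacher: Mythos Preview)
Your proof is correct. Both your argument and the paper's reduce the claim to finding a red matching covering at least $|B'|-t-\alpha n$ vertices of $B'$, and both exploit the hypothesis to control how many blue edges a red vertex can have. The difference is in the matching tool: the paper applies K\"onig's theorem to the full bipartite red graph on $R\cup B'$, arguing by contradiction that a small minimum vertex cover would force some $a_i\geq i+t$, whereas you restrict to the $|B'|-t$ red vertices of smallest blue degree and verify the defect Hall condition directly. The two routes are equivalent in strength (K\"onig and Hall being two sides of the same coin), but your version is slightly more constructive and avoids the case split; the paper's version has the minor advantage of not needing to single out the subset $R_0$.
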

\begin{proof}
	Let $X'$ be a minimum vertex cover of the red edges in the subgraph of $G$ induced by $R \cup B'$.
	If $|X'|\geq |B'|-t-\alpha n$, then by K\"onig's theorem there exists a red matching covering at least $|B'|-t-\alpha n$ vertices of $B'$.
	This together with the vertices in $R$ yields the desired red simple forest.

	Suppose now that $|X'| < |B'|-t-\alpha n$.  Since every edge between $R\setminus (X' \cap R)$ and $B'\setminus(X' \cap B')$ is blue, we have for every vertex $v$ in $R\setminus (X'\cap R)$, 
	\[d_b(v)\geq |B'|-|X'\cap B'|- \alpha n=|X'\cap R|+|B'|-|X'|-\alpha n >  |X'\cap R|+t.\]
	In particular, this implies $a_i\geq i+t$ for $i = |X'\cap R|+1$.
	So $|B'|-t+1 \leq |X'\cap R|+1$ by the assumption in the statement. Together with
	$$|X' \cap R| +1  \leq |X'| +1  < |B'| -t - \alpha n+1<|B'|-t+1,$$ we reach a contradiction.
	\end{proof}
Motivated by this, we introduce the following definitions.
\begin{definition}[Oscillation, $\ell^+(t)$, $\ell^-(t)$]
	Let $a_1, \dots, a_n$ be a non-decreasing sequence of non-negative real numbers. We define its \emph{oscillation} as the maximum value $T$, for which there exist indices $i,j \in [n]$ with $a_i-i\geq T$ and $j-a_j\geq T$. 
	For all $0<t\leq T$, set
	\begin{align*}
	\ell^+(t)&=\min\{i\in[n]\colon\ a_i\geq i+ t\}, \\ \ell^-(t)&=\min\{j\in[n]\colon\ a_j\leq j-t\}.
	\end{align*}
\end{definition}
\newcommand{\ml}{\ell}
Suppose that the degree sequence $a_1,\dots,a_{|R|}$ has oscillation $T$ and fix some positive integer $t \leq T$.  We define $\ell$ and $\lambda$ by
\begin{equation}\label{equ:definition-ml}
	\ml = \ell^+(t) + \ell^-(t)=\lambda t.
\end{equation}
The next claim combines Claims \ref{obs:blue} and \ref{obs:red} into a density bound of a monochromatic simple forest in terms of the ratio $\ml/t=\lambda$.
(Note that, in practice, the term $\alpha n$ will be of negligible size.) 
\begin{claim}\label{obs:stick-it-together}
	There is a monochromatic simple forest $F \subset G$ with $$d(F,\ml+t) \geq \frac{\ml-\alpha n}{\ml+t}=\frac{\lambda t-\alpha n}{(1+\lambda)t}.$$
\end{claim}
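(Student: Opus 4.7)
The plan is to instantiate the subsets from Claims~\ref{obs:blue} and~\ref{obs:red} as the red and blue vertices of the initial segment of length $\ell + t$. Namely, set $R' \coloneqq R \cap [\ell + t]$ and $B' \coloneqq B \cap [\ell + t]$, so that $R' \cup B' = [\ell + t]$ and $|R'| + |B'| = \ell + t$. Whichever claim I invoke will produce a forest $F$ missing only a bounded number of vertices of $R' \cup B'$, and since $R' \cup B'$ is exactly the initial segment of length $\ell + t$, this translates directly to the required lower bound on $|V(F) \cap [\ell + t]|$. The choice of colour depends on how $\ell + t$ splits between $R$ and $B$, so I would case-split on whether $|R'| \leq \ell^-(t)$.

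If $|R'| \leq \ell^-(t)$, then every $1 \leq j \leq |R'| - 1$ satisfies $j < \ell^-(t)$, and by minimality in the definition of $\ell^-(t)$ we get $a_j > j - t$. This is exactly the hypothesis of Claim~\ref{obs:blue}, producing a blue simple forest $F$ which misses at most $t$ vertices of $R' \cup B$, hence of the subset $R' \cup B' = [\ell + t]$. So $|V(F) \cap [\ell + t]| \geq \ell$. Otherwise $|R'| > \ell^-(t)$, and using $\ell = \ell^+(t) + \ell^-(t)$ we deduce $|B'| = \ell + t - |R'| \leq \ell^+(t) + t - 1$. Every $1 \leq i \leq |B'| - t$ then satisfies $i < \ell^+(t)$ and hence $a_i < i + t$, which is the hypothesis of Claim~\ref{obs:red}. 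The claim yields a red simple forest $F$ missing at most $t + \alpha n$ vertices of $R \cup B'$, and therefore of $R' \cup B' = [\ell + t]$, giving $|V(F) \cap [\ell + t]| \geq \ell - \alpha n$. In either case, $d(F, \ell + t) \geq (\ell - \alpha n)/(\ell + t)$, which is the claimed bound.

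The whole argument is bookkeeping gluing the two earlier claims, so I do not foresee a real obstacle. The only points requiring attention are the off-by-one in the first-index definitions of $\ell^+(t)$ and $\ell^-(t)$, and the observation that although Claims~\ref{obs:blue} and~\ref{obs:red} speak about coverage of $R' \cup B$ and $R \cup B'$ respectively, their absolute deficit bounds ($t$ and $t + \alpha n$) restrict to the smaller subset $R' \cup B' = [\ell + t]$ without change, which is what makes the initial-segment choice of $R'$ and $B'$ clean.
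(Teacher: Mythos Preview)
Your proof is correct and essentially identical to the paper's own argument: both define $R' = R \cap [\ell+t]$ and $B' = B \cap [\ell+t]$, case-split on whether $|R'| \leq \ell^-(t)$ (equivalently, whether $\ell^+(t) > |B'|-t$), and apply Claim~\ref{obs:blue} or Claim~\ref{obs:red} accordingly. Your added remark that the deficit bounds on $R'\cup B$ and $R\cup B'$ restrict unchanged to the subset $R'\cup B' = [\ell+t]$ makes explicit a step the paper leaves implicit.
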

\begin{proof}
	Let $R' = R \cap [\ml + t]$ and $B' = B \cap [\ml + t]$ so that $\ell^+(t)+\ell^-(t)=\ell=|R'|+|B'|-t$.
	Thus we have either $\ell^-(t) \geq |R'|$ or $\ell^+(t) > |B'| - t$.
	If $\ell^-(t) \geq |R'|$, then $a_j > j-t$ for every $1 \leq j \leq |R'|-1$.
	Thus Claim~\ref{obs:blue} provides a blue simple forest $F$ covering all but at most $t$ vertices of $[\ell+t]$.
	On the other hand, if $\ell^+(t) > |B'| - t $, then $a_i < i+t$ for every $1 \leq i \leq |B'| -t$.
	In this case Claim~\ref{obs:red} yields a red simple forest $F$ covering all but at most $t+\alpha n$ vertices of $[\ell+t]$.
\end{proof}
Claim \ref{obs:stick-it-together} essentially reduces the problem of finding a dense linear forest to a problem about bounding the ratio $\ml/t$ in integer sequences.
It is, for instance, not hard to see that we always have $\ml \geq 2t$ (which, together with the methods of the previous two subsections, would imply the bound $\bar{d}(P)\geq 2/3$ of Erd\H{o}s and Galvin).  The following lemma provides an essentially optimal lower bound on $\ml/t=\lambda$.  Note that for $\lambda=4+\sqrt{8}$, we have $\frac{\lambda}{\lambda+1}=\rioflat$.
\begin{lemma}\label{lem:oscillation}
	For all $\gamma\in \mathbb{R}^+$, there exists $N\in \N$ such that, for all $k\in \mathbb{R}^+$ and all sequences with oscillation at least $kN$, there exists a real number $t\in[k,kN]$ with \[\ml := \ell^+(t)+\ell^-(t)\geq\left(4+\sqrt{8}-\gamma\right)t.\]
\end{lemma}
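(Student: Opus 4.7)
The plan is to argue by contradiction. Suppose that for every $t \in [k, kN]$ we have $\ell := \ell^+(t) + \ell^-(t) < \Lambda t$, where $\Lambda := 4 + \sqrt{8} - \gamma$. I aim to derive a contradiction with the hypothesis that the oscillation of the sequence is at least $kN$, which guarantees that the running extremes of $A(i) := a_i - i$ cover the interval $[-kN, kN]$.

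First, I would parametrise $\ell^+$ and $\ell^-$ via the records of $A$. Let $(p_m, M_m)$ and $(q_{m'}, -L_{m'})$ denote the positions and values of the successive running maxima and minima of $A$. Then $\ell^+(t) = p_m$ on $(M_{m-1}, M_m]$ and $\ell^-(t) \leq q_{m'}$ on $(L_{m'-1}, L_{m'}]$. Since $a$ is non-decreasing, $A$ decreases by at most one per step, giving positional constraints such as $q_{m'} - p_m \geq M_m + L_{m'}$ whenever the $m'$-th running minimum directly follows the $m$-th running maximum, and $p_{m+1} \geq q_m$. Note also that on the plateau starting at $p_m$, the value of $\ell^-(t)$ for $t$ in the relevant range is exactly $p_m + M_m + t$, so the ratio $\ell(t)/t$ is piecewise of the form $(p_{m+1} + p_m + M_m)/t + 1$, maximised at the left endpoint of each piece.

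Evaluating the hypothesis $\ell(t) < \Lambda t$ just after each jump of $\ell^+$ or $\ell^-$ inside $[k, kN]$ then yields a family of linear inequalities in $p_m, q_{m'}, M_m, L_{m'}, \Lambda$. I would analyse these first in the self-similar regime where the four quantities grow geometrically with ratio $\mu$. After normalising $M_m = \mu^{m-1}$, the active inequalities from the two types of jumps read $P(\mu+1) \leq \Lambda - 2$ and $(2P+1)\mu \leq (\Lambda - 1)L$, together with the positional bound $L \leq P(\mu - 1) - 1$. Balancing these at the boundary forces $P = (1+\sqrt{\mu})/(\mu-1)$ and hence
\[ \Lambda \;\geq\; 3 + \sqrt{\mu} + \frac{2}{\sqrt{\mu} - 1}. \]
Elementary calculus shows that the right-hand side, viewed as a function of $u = \sqrt{\mu} > 1$, is minimised at $u = 1 + \sqrt{2}$ with minimum $4 + 2\sqrt{2} = 4 + \sqrt{8}$. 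Hence the self-similar system is infeasible as soon as $\Lambda < 4 + \sqrt{8}$.

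For a general (non-self-similar) sequence I would pass to the self-similar case by pigeonhole over $\Omega(\log N)$ logarithmically-spaced scales inside $[k, kN]$. Choosing $N$ large enough in terms of $\gamma$, some block of consecutive scales must see the critical points of $\ell^+$ and $\ell^-$ distributed approximately geometrically, up to an error that can be absorbed into $\gamma$; the self-similar computation then produces the desired $t$. The main obstacle is precisely this last reduction: the non-decreasing condition on $a$ is too weak to directly enforce geometric growth, so one must argue that among the many critical points in $[k, kN]$ a consecutive block is close enough to the self-similar extremal to violate the assumed bound. This is the quantitative step where the dependence $N = N(\gamma)$ enters.
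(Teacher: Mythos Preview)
Your self-similar analysis correctly identifies the extremal configuration and recovers the constant $4+\sqrt{8}$, but the proposal has a genuine gap exactly where you flag it: the reduction from an arbitrary sequence to an approximately self-similar one. The pigeonhole sketch does not work as stated. Having $\Omega(\log N)$ records with ratios bounded in some fixed interval does not force any run of \emph{consecutive} ratios to be nearly equal; the ratios could, for example, alternate between two values forever. What you actually need is that the family of linear inequalities you wrote down at the jump points is jointly infeasible over sufficiently many consecutive records once $\Lambda<4+\sqrt{8}$, and that is an algebraic statement about the system, not a pigeonhole statement about individual ratios.

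The paper handles this without any reduction to a self-similar block. It first passes to the sequence $s_1,s_2,\ldots$ of record values of $|a_j-j|$ on the successive above/below-diagonal intervals, and from the hypothesis $\ell^+(t)+\ell^-(t)<\Lambda t$ extracts, on an increasing subsequence $a'_1,a'_2,\ldots$, the recurrence inequality
\[
a'_{j+1}\le(\rho-2)a'_j-2(a'_1+\cdots+a'_{j-1}),\qquad \rho=3+\sqrt{8}-\gamma.
\]
This is dominated by the linear recurrence $b_{j+1}=(\rho-1)b_j-\rho b_{j-1}$, whose characteristic polynomial $x^2-(\rho-1)x+\rho$ has discriminant $\rho^2-6\rho+1<0$ precisely when $\rho<3+\sqrt{8}$. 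Complex roots force $(b_j)$, and hence $(a'_j)$, to become negative after $m=m(\gamma)$ steps, contradicting non-negativity; this is what fixes $N=N(\gamma)$. So the missing idea in your approach is this recurrence/complex-root mechanism, which replaces the vague pigeonhole step and makes the dependence on $\gamma$ explicit.
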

The proof of Lemma~\ref{lem:oscillation} is deferred to the last section.
We now finish the proof of Lemma~\ref{lem:simple-forest}.
Set $N=N(\gamma)$ to be the integer returned by Lemma~\ref{lem:oscillation} with input $\gamma = \gamma'/4$.
In order to use Lemma~\ref{lem:oscillation}, we have to bound the oscillation of $a_1,\dots,a_{|R|}$:
\begin{claim}\label{cla:oscillation}
	The degree sequence $a_1,\dots,a_{|R|}$ has oscillation $T \geq kN/8$ or there is a monochromatic simple forest $F \subset G$ with $d(F, n)\geq \rioflat- \gamma $.
\end{claim}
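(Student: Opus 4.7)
The plan is to show that a small oscillation directly forces a monochromatic simple forest whose density at $n$ is close to $7/8$, via a straight application of Claims~\ref{obs:blue} and~\ref{obs:red}. Since $7/8$ exceeds $\rioflat$ by a positive absolute constant (roughly $0.003$), any slack introduced by the error term $\alpha$ or by integer rounding is comfortably absorbed into $\gamma$, and no appeal to Lemma~\ref{lem:oscillation} is needed in this branch.

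Assume $T < kN/8$ and set $t = \lceil kN/8 \rceil$. By the definition of oscillation, at least one of
\[
\max_{i \in [|R|]} (a_i - i) < \tfrac{kN}{8} \qquad\text{or}\qquad \max_{j \in [|R|]} (j - a_j) < \tfrac{kN}{8}
\]
must hold. In the first case we have $a_i < i + t$ for every $i \in [|R|]$, so the hypothesis of Claim~\ref{obs:red} (applied with $R' = R$ and $B' = B$) is satisfied and produces a red monochromatic simple forest $F$ covering all but at most $t + \alpha n$ vertices of $R \cup B = [n]$. In the second case $a_j > j - t$ for every $j \in [|R| - 1]$, so Claim~\ref{obs:blue} (applied with $R' = R$) yields a blue monochromatic simple forest $F$ covering all but at most $t$ vertices of $[n]$.

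Either way,
\[
d(F, n) \;\geq\; 1 - \frac{t}{n} - \alpha \;\geq\; \frac{7}{8} - \alpha - \frac{1}{n}.
\]
Since $\alpha \leq \gamma/(8N)$ and $n = kN \geq k_0 N$ by the choices made at the start of the section, both $\alpha$ and $1/n$ can be made much smaller than the positive absolute constant $7/8 - \rioflat$, so the right-hand side exceeds $\rioflat - \gamma$, as required. The main obstacle is the careful verification in Case~(A) that the index range $1 \leq i \leq |B'|-t$ in the hypothesis of Claim~\ref{obs:red} genuinely fits inside $[|R|]$: when $|B|$ is significantly larger than $|R|$, this range nominally runs past the degree sequence and the invocation of Claim~\ref{obs:red} has to be justified by either restricting $B'$ to its first $|R|+t$ elements (and handling the leftover blue vertices separately) or by a short direct argument on the abundant blue side. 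Everything else is routine.
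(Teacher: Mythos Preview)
Your contrapositive route---assume small oscillation, split into the two natural cases, and feed each into Claim~\ref{obs:blue} or Claim~\ref{obs:red}---is close in spirit to the paper's direct K\"onig argument, and your Case~(B) works as written. The difficulty you isolate in Case~(A), however, is not a technicality to be patched but a genuine obstruction: neither of your two suggested fixes succeeds, because in that regime the conclusion actually fails.

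Take $|R|=n/4$, $|B|=3n/4$, and make every edge of the bipartite graph $G$ red. Then $a_i=0$ for every $i$, so you land in Case~(A) with $|B|-t\approx 5n/8>|R|$. Restricting $B'$ to its first $|R|+t$ elements and applying Claim~\ref{obs:red} gives a red simple forest on at most $2|R|=n/2$ vertices; the leftover blue vertices have no incident edge with a red endpoint and so cannot be absorbed into any red simple forest. On the blue side there are no blue edges in $G$ whatsoever, so the best blue simple forest is just $B$, of density $3/4$. Thus no monochromatic simple forest $F\subset G$ attains $d(F,n)\ge \rioflat-\gamma$, even though the oscillation here is negative---so neither a ``short direct argument on the abundant blue side'' nor any other local fix will work. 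The paper's own proof has exactly the same blind spot: when the minimum red vertex cover $X$ satisfies $X\cap R=R$, the deduction ``$a_i\ge i+n/8$ for $i=|X\cap R|+1$'' invokes a term $a_{|R|+1}$ that does not exist, and the example above is precisely the configuration where this happens.
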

Before we prove Claim~\ref{cla:oscillation}, let us see how this implies Lemma~\ref{lem:simple-forest}.
\begin{proof}[Proof of Lemma~\ref{lem:simple-forest}]
	By Claim~\ref{cla:oscillation}, we may assume that the sequence $a_1, \dots, a_{|R|}$ has oscillation at least $kN/8$.
	By Lemma \ref{lem:oscillation}, there is a real number $t' \in \left[k/8,{kN}/8\right]$ with \[\ml = \ell^+(t')+\ell^-(t')\geq (4+\sqrt8-\gamma)t'.\] 
	Let $t=t(\gamma,N,k) =\left\lceil t'\right\rceil$. 
	Since the $a_i$'s are all integers, we have $\ell^+(t)=\ell^+(t')$ and $\ell^-(t)=\ell^-(t')$.
	Let $F \subset G$ be the monochromatic simple forest obtained from Claim~\ref{obs:stick-it-together}.
	As $n = kN$, $\ell \geq t'\geq  {k/8 \geq 1/\gamma}$, $\alpha \leq \gamma/(8N)$, and by~\eqref{equ:definition-ml}, it follows that
	\begin{align*}
	d(F,\ml+t) &\geq \frac{\ml-\alpha n}{\ml+t} = \frac{1-\alpha n/\ml}{1+\frac{t}{\ml}} \geq \frac{1-8\alpha N }{1+\frac{t'}{\ml}  {+ \frac{1}{\ml}}} \geq \frac{1 }{1+\frac{t'}{\ml} }  { - 2\gamma }
	\\ &\geq \frac{1}{1+\frac{1}{ 4+\sqrt8-\gamma}} - 2\gamma  = \frac{4+\sqrt8 - \gamma }{5+\sqrt8-\gamma} - 2\gamma 
	 \\&\geq \frac{4+\sqrt{8}}{5+\sqrt{8}}-4\gamma  =\frac{12+\sqrt{8}}{17}-\gamma',
	\end{align*}
	as desired.	
\end{proof}
To finish, it remains to show Claim~\ref{cla:oscillation}.
The proof uses K\"onig's theorem and is similar to the proof of Claim~\ref{obs:red}.
\begin{proof}[Proof of Claim~\ref{cla:oscillation}]
	Let $X$ be a minimum vertex cover of the red edges. If $|X|\geq {|B|}-(1/8+\alpha)n$, then K\"onig's theorem implies that there is a red matching covering all but at most $(1/8+\alpha)n$ blue vertices.
	Thus adding the red vertices, we obtain a red simple forest $F$ with $d(F, kN)\geq 7/8-\alpha\geq \rioflat- \gamma $.
	Therefore, we may assume that $|X|< {|B|}-(1/8+\alpha)n$. 
	Every edge between $R\setminus (X\cap R)$ and $B\setminus(X\cap B)$ is blue.
	So there are at least ${|R|}-|X\cap R|$ red vertices $v$ with 
	\[d_b(v) \geq {|B|}-|X\cap B|-\alpha n =|X\cap R|+{|B|}-|X|-\alpha n> |X\cap R|+n/8.\]
	This implies that $a_{i}\geq i+n/8$ for $i = |X\cap R|+1$. (See Figure \ref{fig:osc}.)
    
    \begin{figure}[ht]
	\centering
	\includegraphics[scale=1]{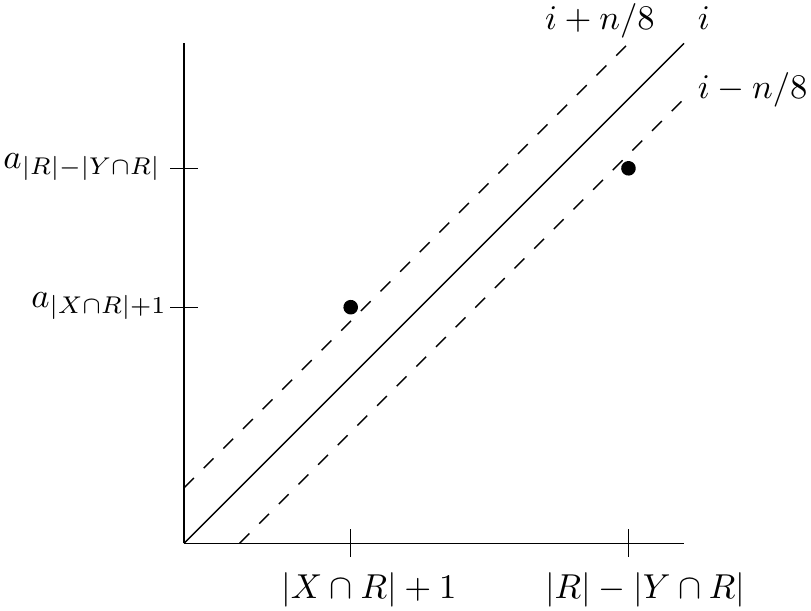}
	\caption{The sequence $a_1, \dots, a_{|R|}$ has oscillation at least $kN/8$.}\label{fig:osc}
\end{figure}
	
	Let $Y$ be a minimum vertex cover of the blue edges. Using K\"onig's theorem as above, we can assume that $|Y|\leq {|R|}-{n}/{8}$.
	Every edge between $R\setminus(Y\cap R)$ and $B\setminus (Y\cap B)$ is red.
	It follows that there are at least ${|R|}-|Y\cap R|$ red vertices $v$ with 
	\[d_b(v)\leq |Y\cap B|=|Y|-|Y\cap R|\leq {|R|}-|Y\cap R|-\frac{n}8.\]
	This implies that $a_{j}\leq j-{n}/8$ for $j = {|R|}-|Y\cap R|$.
	Thus $a_1, \dots, a_{|R|}$ has oscillation at least $n/8=kN/8$.
\end{proof}

\subsection{Sequences and oscillation}

We now present the quite technical proof of Lemma~\ref{lem:oscillation}.
We will use the following definition and related lemma in order to describe the oscillation from the diagonal.

\begin{definition}[$k$-good, $\uo(k)$, $\ue(k)$]
	Let $a_1,  \dots, a_n$ be a sequence of non-negative real numbers and let $k$ be a positive real number. We say that the sequence is $k$-\emph{good} if there exists an odd $i$ and an even $j$ such that $a_i\geq k$ and $a_j\geq k$.  If the sequence is $k$-good, we define for all $0<t\leq k$
	\begin{align*}
	\uo(t)&=a_1+\dots+a_{i_o-1}\quad \text{where }i_o=\min\{i\colon\ a_i\geq t, i\text{ odd}\}, \\
	\ue(t)&=a_1+\dots+a_{i_e-1}\quad \text{where }i_e=\min\{i\colon\ a_i\geq t, i\text{ even}\}.
	\end{align*}
\end{definition}

\begin{lemma}\label{lem:good-sequence}
	For all $\gamma\in \mathbb{R}^+$ there exists $N\in \N$ such that for all $k\in \mathbb{R}^+$ and all $(kN)$-{good} sequences, there exists a real number $t\in[k,kN]$ with\[\uo(t)+\ue(t)\geq \left(3+\sqrt{8}-\gamma\right)t.\]
\end{lemma}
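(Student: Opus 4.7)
The plan is to argue by contradiction: suppose there is a $(kN)$-good sequence $(a_i)$ with $\uo(t) + \ue(t) < (c - \gamma) t$ for every $t \in [k, kN]$, where $c := 3 + \sqrt 8$, and derive an absolute bound $N \leq N(\gamma)$. First I reduce to canonical form by setting $a_i := 0$ at every index $i$ that is not a \emph{record} (a position where $a_i$ strictly exceeds every earlier same-parity entry). This can only decrease $\uo$ and $\ue$, so the contradiction hypothesis is preserved; the surviving structure consists of two strictly increasing record sequences $v_1^{\mathrm o} < v_2^{\mathrm o} < \cdots$ and $v_1^{\mathrm e} < v_2^{\mathrm e} < \cdots$, which I merge into a single value-ordered sequence $w_1 < w_2 < \cdots$ with types $T_r \in \{\mathrm o, \mathrm e\}$. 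Via a reordering/swap argument I also assume that the record positions agree with this value ordering (subject to the parity constraint).

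The key step is to extract a recursion. Write $S_r := w_1 + \cdots + w_r$ and $y_r := S_r / w_r$. A direct computation at the jump time $t = w_r^+$ yields $\uo + \ue = 2 S_r + w_{r+1}$ in the alternating and doublet cases, and strictly larger in longer runs of consecutive same-type records (which only tighten the adversary's constraint). Hence $\uo + \ue \geq 2 S_r + w_{r+1}$ always, so the hypothesis forces $w_{r+1}/w_r < (c - \gamma) - 2 y_r$, which rearranges into $y_{r+1} > f_\gamma(y_r)$ with $f_\gamma(y) := ((c - \gamma) - y)/((c - \gamma) - 2y)$. The heart of the proof is the identity
\[
f_\gamma(y) - y = \frac{2 y^2 - ((c - \gamma) + 1)\, y + (c - \gamma)}{(c - \gamma) - 2 y},
\]
whose numerator is a quadratic in $y$ with discriminant $((c - \gamma) + 1)^2 - 8(c - \gamma)$. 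The value $c = 3 + \sqrt 8$ is precisely what makes this discriminant vanish at $\gamma = 0$, so for any $\gamma > 0$ it is strictly negative, the numerator is uniformly bounded below on $\mathbb R$ by some $\eta(\gamma) > 0$, and on the admissible domain $1 \leq y_r < (c - \gamma)/2$ one deduces $y_{r+1} - y_r \geq \delta(\gamma) > 0$ at every step.

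Since $y_r \geq 1$ grows by at least $\delta(\gamma)$ per step yet must stay below $(c - \gamma)/2$, the number of records in $[k, kN]$ is bounded by some $R(\gamma) = O(1/\gamma)$; combined with the uniform growth bound $w_{r+1}/w_r < c$, this gives $w_{\max} \leq k \cdot c^{R(\gamma)}$, contradicting $(kN)$-goodness once $N$ exceeds $N(\gamma) := c^{R(\gamma)}$. The main obstacle is the position-reordering argument in the first paragraph: proving that value-ordered positions are worst-case for the adversary is intuitive — any other placement only inflates one of $\uo, \ue$ at some critical times without proportionally helping at others — but making this precise requires a careful case analysis depending on which of $P_{s_o(r)}, P_{s_e(r)}$ is smaller at each critical time. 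Once that reduction is in place, the algebra unfolds thanks to the miraculous vanishing of the discriminant at the precise value $c = 3 + \sqrt 8$, and the remainder of the argument is essentially automatic.
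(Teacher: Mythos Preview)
Your overall strategy coincides with the paper's: argue by contradiction, pass to a record subsequence, derive the recursive inequality $2S_r+w_{r+1}\le\rho\,w_r$ (with $\rho=3+\sqrt 8-\gamma$), and then exploit that $\rho^2-6\rho+1<0$. Your analysis of the recursion via the ratio $y_r=S_r/w_r$ and the map $f_\gamma$ is a clean alternative to the paper's treatment, which instead rewrites the same inequality as the second--order recurrence $b_{i+1}=(\rho-1)b_i-\rho b_{i-1}$ and invokes the fact that a linear recurrence with non-real characteristic roots must become negative. The discriminant you compute, $(\rho+1)^2-8\rho=\rho^2-6\rho+1$, is exactly the discriminant of the paper's characteristic polynomial $x^2-(\rho-1)x+\rho$, so the two endgames are equivalent.

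The gap is in your reduction step. You take \emph{parity-wise} records and then want to reorder so that positions agree with values, claiming this is ``worst-case for the adversary''. That claim points the wrong way. Take $a_1=5$ (odd), $a_2=3$ (even), $a_3=0$, $a_4=10$ (even). At $t=3^+$ one has $\uo+\ue=0+(5+3+0)=8$, whereas in the value-ordered placement (3 at position~2, 5 at position~3, 10 at position~4) one gets $\uo+\ue=3+8=11$. So reordering \emph{increases} $\uo+\ue$, and the hypothesis $\uo+\ue<\rho t$ need not survive the reduction; conversely, the key inequality $\uo(w_r^+)+\ue(w_r^+)\ge 2S_r+w_{r+1}$ can fail for the unreordered sequence (here $8<2\cdot3+5=11$). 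Your ``careful case analysis'' would have to overcome this, and it is not clear it can.

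The paper sidesteps the whole issue by taking \emph{global} records $I=\{i:a_i\ge k,\ a_i>a_j\text{ for all }j<i\}$ rather than parity-wise ones. For these, position order and value order agree automatically, and the inequality $\uo(a'_j{}^+)+\ue(a'_j{}^+)\ge 2(a'_1+\cdots+a'_j)+a'_{j+1}$ follows in two lines (if $i_{j+1}$ is odd then $i_o=i_{j+1}$ and $i_e\ge i_{j+1}+1$, etc.). If you replace your parity-wise records by global records, the reordering step disappears entirely, the recursive inequality is immediate, and your $y_r$-dynamics argument finishes the proof just as well as the paper's recurrence argument. The bound $a'_1<\rho k$ (needed for your final growth estimate) also follows from the hypothesis at $t=k$, as in the paper.
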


First we use Lemma \ref{lem:good-sequence} to prove Lemma~\ref{lem:oscillation}.

\begin{proof}[Proof of Lemma~\ref{lem:oscillation}]
	Given $\gamma >0$, let $N$ be obtained from Lemma~\ref{lem:good-sequence}.
	Let $k \in \mathbb{R}^+$ and $a_1,\dots,a_n$ be a sequence with oscillation at least $kN$.
	Suppose first that $a_1\geq 1$.  Partition $[n]$ into a family of non-empty intervals
$I_1, \dots, I_r$ with the following properties: 
	\begin{itemize}
		\item For  every odd $i$ and every $j\in I_i$, we have $a_j\geq j$.
		\item For every even $i$ and every $j\in I_i$, we have $a_j< j$.
	\end{itemize} 
	Define $s_i=\max\left\{|a_j-j|\colon\ j\in I_i\right\}$. 
	Intuitively, this is saying that the values in the odd indexed intervals are ``above the diagonal'' and the values in the even indexed intervals are ``below the diagonal'' and $s_i$ is the largest gap between sequence values and the ``diagonal'' in each interval.  
	
	Since $a_1,\dots,a_n$ has oscillation at least $kN$, the sequence $s_1, \dots, s_r$ is $(kN)$-good and thus by Lemma \ref{lem:good-sequence}, there exists $t\in[k,kN]$ such that 
	\begin{equation}\label{eq:seq}
	\uo(t)+\ue(t)\geq \left(3+\sqrt{8}-\gamma\right)t.
	\end{equation} 
	Since the sequence $a_1, a_2, \dots, a_n$ is non-decreasing, $a_j-j$ can decrease by at most one in each step and thus we have $|I_i|\geq s_i$ for every $i\in [r-1]$. Moreover, we can find bounds on $\ell^+(t)$ and $\ell^-(t)$ in terms of the $s_i$:	
	\begin{itemize}
		\item $\ell^+(t)$ must lie in the interval $I_i$ with the smallest odd index $i_o$ such that $s_{i_o}\geq t$, therefore $\ell^+(t)\geq s_1+\dots +s_{i_o-1}=\uo(t)$.
		\item $\ell^-(t)$ must lie in the interval $I_j$ with the smallest even index $i_e$ such that $s_{i_e}\geq t$. Moreover, it must be at least the $t$-th element in this interval, therefore $\ell^-(t)\geq s_1+\dots +s_{i_e-1}+t=\ue(t)+t$.
	\end{itemize}
	Combining the previous two observations with \eqref{eq:seq} gives 
	\begin{equation*}
	\ell^+(t)+\ell^-(t)\geq \uo(t)+\ue(t)+t\geq \left(4+\sqrt{8}-\gamma\right)t, 
	\end{equation*}
	as desired. 
	
	If $0\leq a_1<1$, we start by partitioning $[n]$ into a family of non-empty intervals $I_1, \dots, I_r$ with the following properties: 
	\begin{itemize}
		\item For  every even $i$ and every $j\in I_i$, we have $a_j\geq j$.
		\item For every odd $i$ and every $j\in I_i$, we have $a_j< j$.
	\end{itemize} 
	From this point, the proof is analogous.
\end{proof}

Finally, it remains to prove Lemma~\ref{lem:good-sequence}. The proof is by contradiction and the main strategy is to find a subsequence with certain properties which force the sequence to become negative eventually.

\begin{proof}[Proof of Lemma~\ref{lem:good-sequence}]
	Let $\rho=3+\sqrt{8}-\gamma$ and let $m:=m(\rho)$ be a positive integer which will be specified later.  Suppose that the statement of the lemma is false for $N=6\cdot 4^m$ and let $a_1, \dots, a_n$ be an $(Nk)$-good sequence without $t$ as in the statement.  We first show that $a_i$ has a long strictly increasing subsequence.
	Set \[I=\{i\colon\ a_i\geq k, a_i>a_j\text{ for all }j<i\},\] denote the elements of $I$ by $i_1\leq i_2\leq \dots\leq i_r$ and let $a'_j=a_{i_j}$. Consider any $j \in [r-1]$ and suppose without loss of generality that $i_{j+1}$ is odd.
	For $\delta$ small enough, this implies $\uo(a'_j+\delta)=a_1+\dots+a_{i_{j+1}-1}\geq a'_1+\dots+a'_j$, and $\ue(a'_j+\delta)\geq a_1+\dots+a_{i_{j+1}} \geq a'_1+\dots+a'_{j+1}$.
	By assumption we have $\uo(a'_j+\delta)+\ue(a'_j+\delta)< \rho(a'_j+\delta)$.
	Hence, letting $\delta\rightarrow 0$ we obtain $2\left(a'_1+\dots+a'_j\right)+a'_{j+1}\leq\rho a'_j$, which rearranges to
	\begin{equation}\label{reca}
	a'_{j+1}\leq (\rho-2)a'_j-2\left(a'_1+\dots+a'_{j-1}\right).
	\end{equation} 
  In particular, this implies $a'_{j+1}\leq (\rho-2)a'_j <4a'_j$. Moreover, we have $a_1' \leq \uo(k)$ if $i_1$ is even and $a_1' \leq \ue(k)$ if $i_1$ is odd. Therefore,
	\[6k\cdot 4^m=kN\leq a_r'< 4^r\cdot a_1'\leq 4^r\max\{\uo(k), \ue(k)\}\leq 4^r(\uo(k)+\ue(k))<4^r\cdot \rho k<6k\cdot 4^r
	\]
	and thus $r\geq m$.  
	
	Finally, we show that any sequence of reals satisfying \eqref{reca}, will eventually become negative, but since $a_i'$ is non-negative this will be a contradiction.  
	
	We start by defining the sequence $b_1, b_2, \dots$ recursively by $b_1=1$ and $b_{i+1}=(\rho-2)b_i-2(b_1+\dots+b_{i-1})$.  Note that 
	\begin{align*}
	b_{i+1}&=(\rho-2)b_i-2(b_1+\dots+b_{i-1})\\
	&=(\rho-1)b_i-b_i-2(b_1+\dots+b_{i-1})\\
	&=(\rho-1)b_i-((\rho-2)b_{i-1}-2(b_1+\dots+b_{i-2}))-2(b_1+\dots+b_{i-1})\\
	&=(\rho-1)b_i-\rho b_{i-1}
	\end{align*}
	So equivalently the sequence is defined by, \[b_1=1,~b_2=\rho-2,\text{ and } b_{i+1}=(\rho-1)b_i-\rho b_{i-1}\text{ for }i\geq 2.\] 
	It is known that a second order linear recurrence relation whose characteristic polynomial has non-real roots will eventually become negative (see \cite{BW81}). Indeed, the characteristic polynomial $x^2-(\rho-1)x+\rho$ has discriminant $\rho^2-6\rho+1<0$ and so its roots $\alpha,\bar\alpha$ are non-real.  Hence the above recursively defined sequence has the closed form of $b_i=z\alpha^i+\bar z\bar\alpha^i=2\text{Re}\left(z\alpha^i\right)$ for some complex number $z$.  By expressing $z\alpha^i$ in polar form we can see that $b_m<0$ for some positive integer $m$.  Note that the calculation of $m$ only depends on $\rho$. 
	
	Now let $a'_1,  \dots, a'_m$ be a sequence of non-negative reals satisfying \eqref{reca}.  We will be done if we can show that $a_j'\leq a_1'b_j$ for all $1\leq j\leq m$; so suppose $a'_s>a'_1b_s$ for some $s$, and such that $\{a'_j\}_{j=1}^m$ and $\{a'_1b_j\}_{j=1}^m$ coincide on the longest initial subsequence. Let $p$ be the minimum value such that $a'_p\neq a'_1b_p$. Clearly $p>1$. Applying \eqref{reca} to $j=p-1$ we see that 
	\begin{align*}
	a'_p\leq (\rho-2)a'_{p-1}-2(a'_1+\dots+a'_{p-2})
	&=(\rho-2)a'_1b_{p-1}-2(a'_1b_1+\dots+a'_1b_{p-2})\\
	&=a_1'((\rho-2)b_{p-1}-2(b_1+\dots+b_{p-2}))=a'_1b_p
	\end{align*}
	and thus $a'_p<a'_1b_p$.
	
	Let $\beta={(a'_1b_p-a'_p)}/{a'_1}>0$. Now consider the sequence $a''_j$ where $a''_j=a'_j$ for $j<p$ and $a''_j=a'_j+\beta a'_{j-p+1}$ for $j\geq p$. Then $a''_p=a'_1b_p=a''_1b_p$. Clearly, this new sequence satisfies \eqref{reca} for every $ j < p$. Furthermore, we have
	\begin{align*}
	a''_{p+j}&=a'_{p+j}+\beta a'_{j+1}\\
	&\leq (\rho-2)a'_{p+j-1}-2\left(a'_1+\dots+a'_{p+j-2}\right) +\beta(\rho-2)a'_j-2\beta\left(a'_1+\dots+a'_{j-1}\right)\\
	&=(\rho-2)a''_{p+j-1}-2\left(a''_1+\dots+a''_{p+j-2}\right)
	\end{align*}
    for every $j \geq 0$. Hence, the whole sequence satisfies \eqref{reca}.
	We also have $a''_s\geq a'_s>a'_1b_s=a''_1b_s$.  This contradicts the fact that $a_j'$ was such a sequence which coincided with $a_1'b_j$ on the longest initial subsequence.
\end{proof}


\section*{Acknowledgments} 
This project began as part of the problem session of the ``Extremal Graph Theory and Ramsey Theory'' focus week of the ``Rio Workshop on Extremal and Structural Combinatorics'' held at IMPA, Rio de Janeiro, Brazil in January 2018.  We thank the organisers of the workshop and IMPA for the stimulating working environment.

We also thank the referees for their careful reading of the paper and their helpful suggestions.  

\bibliographystyle{amsplain}


\begin{aicauthors}
\begin{authorinfo}[jc]
  Jan Corsten\\ 
  London School of Economics, Department of Mathematics, London WC2A 2AE.\\
  j\imagedot{}corsten\imageat{}lse\imagedot{}ac\imagedot{}uk
\end{authorinfo}
\begin{authorinfo}[ld]
  Louis DeBiasio\\ 
  Miami University, Department of Mathematics, Oxford, OH, 45056, United States.\\
  debiasld\imageat{}miamioh\imagedot{}edu
\end{authorinfo}
\begin{authorinfo}[al]
  Ander Lamaison\\ 
  Institut f\"ur Mathematik, Freie Universit\"at Berlin and Berlin Mathematical School, Berlin, Germany.\\
  lamaison\imageat{}zedat\imagedot{}fu-berlin\imagedot{}de
\end{authorinfo}
\begin{authorinfo}[rl]
  Richard Lang\\ 
  University of Waterloo, Combinatorics \& Optimization, Waterloo, ON, N2L 3G1, Canada.\\
  r7lang\imageat{}uwaterloo\imagedot{}cl
\end{authorinfo}
\end{aicauthors}

\end{document}